\theoremstyle{plain}
\newtheorem{Lem}{Lemma}
\newcommand{\cm}[2]{\bqty{#1,#2}} 
\newcommand{\qn}[1]{\bqty{#1}_q} 
\newcommand{\proj}[2]{P^{#1}_{#2}}  
\newcommand{\qbc}[2]{ \left[\,\begin{matrix} #1 \\ #2 \end{matrix}\,\right]_q } 
\newcommand{\qbcs}[2]{ \left[\begin{smallmatrix} #1 \\ #2 \end{smallmatrix}\right]_q } 
\title[The $q$-analogue of the QTAM: a review from SF]{The $q$-analogue of the Quantum Theory of Angular Momentum: a review from Special Functions}
\author[R. \'Alvarez-Nodarse]{Renato \'Alvarez-Nodarse}
\address{Departamento de An\'alisis Matem\'atico, Universidad de Sevilla, Av.~Reina Mercedes s/n, E-41012 Sevilla, Spain}
\email{ran@us.es}
\author[A. Arenas-G\'omez]{Alberto Arenas-G\'omez$^{\ast}$}
\address{Departamento de Matem\'aticas y Computaci\'on, Universidad de La Rioja, Calle Madre de Dios 53, 26006 Logro\~no, Spain}
\email{alberto.arenas@unirioja.es}
\date{\today}
\thanks{$^{\ast}$ Corresponding author}
\begin{document}

\begin{abstract}
In the present paper we review the $q$-analogue of the Quantum Theory of Angular Momentum based on the $q$-algebra $su_q(2)$ with a special emphasis on the representation of the Clebsch-Gordan coefficients in terms of $q$-hypergeometric series. This representation allows us to obtain several known properties of the Clebsch-Gordan coefficients in an unified and simple way.
\end{abstract}

\keywords{$q$-hypergeometric functions, basic series, Clebsch-Gordan coefficients, Quantum Theory of Angular Momentum, $q$-algebra $su(2)$.}

\maketitle


\section{Introduction}\label{sec1}
It is well known the important role that the representation theory of groups and algebras plays in Physics (see e.g.~\cite{ste:94,tik:03}), and in particular in the Quantum Theory of Angular Momentum (QTAM)~\cite{var:88}. In fact a deep knowledge of the group theory and, in particular, of the representation theory, allows us to understand a lot of phenomena of physical systems as it is shown, for example, in the already classical monographs~\cite{bie:81,bie:81b}.

So it is understandable that after the appearance of the so-called quantum groups and $q$-algebras at the end of the XX century, there were an increasing interest in understanding if they can have any role in solving some physical problems. In fact, quantum groups and their corresponding $q$-algebras appeared as an abstraction in the development of the theory of quantum integrable dynamical systems,  as well as in the context of exactly soluble models of statistical physics (for the history of that subject see the nice surveys~\cite{fad89,fad06}).

After the publication of the first $q$-analogues of physical systems (like the harmonic oscillator \cite{bie,mac}), there were a lot of research papers exploring the connection between those new models with the $q$-algebras. One of the most interesting study was the one devoted to the construction of a suitable $q$-analogue of the Quantum Theory of Angular Momentum (for a review on the QTAM we refer to the reader to the classical books \cite{bie:81,var:88} and references therein).

Among all possible constructions of QTAM there is one based on the projection operators (PO), those defined by von Neumann in his seminal book \cite[p.~50]{von:18}. This approach has been used by Lowdin for the first time in 1964, further developed by Shapiro in 1965 for $su(2)$, and by Smirnov et al.~starting from 1968 for $su(3)$, etc.~(for a full history of this method see \cite{tol:05}). In fact, the construction of the $q$-analogue of the QTAM by using the projection operators method was developed by Smirnov, Tolstoy, and Kharitonov in a series of papers started with \cite{smi1,smi2}. Since these papers are hard to find, we will describe here the method used in \cite{smi1,smi2} for the $su_q(2)$ algebra and, in a future contribution, the case of the $su_q(1,1)$ as well. For a more recent review on the PO method see \cite{tol:01} and for the representation theory of $su_q(2)$ and  $su_q(1,1)$ $q$-algebras see~\cite{smi-g}.

We will also go further by exploring the connection of the Clebsch-Gordan coefficients (CGC) with certain special functions. The connection between $q$-algebras and $q$-special functions is well documented in the literature (see e.g.~\cite{koelink,koe:96,vil:92}). Here we will focus our attention in the connection with the $q$-hypergeometric function ${}_3F_2$ introduced by Nikiforov and Uvarov (see e.g.~\cite[p.~138]{nsu}), which is a
symmetric version of the well known basic hypergeometric series ${}_3\varphi_2$ (see e.g.~\cite{gas}). In fact, using some properties of the basic series, which can be easily translated to the $q$-hypergeometric function ${}_3F_2$, it is possible to obtain in a very easy way the properties of the corresponding CGC. As examples of this we will derive the symmetry properties of the CGC, properties that require an elaborate proof by other methods (see \cite{smi2}). Finally, we will establish the connection of the Clebsch-Gordan coefficients with a certain $q$-analogue of the Hahn polynomials by means of the obtained relation with the $q$-hypergeometric function~${}_3F_2$.

Our main aim in this paper is to offer an unified approach to the $q$-analogue of the Quantum Theory of Angular Momentum by means of the method of projection operators and the connection with the $q$-hypergeometric function ${_3}F_2(\cdot|q;z)$, which allow us to recover several well known results and obtain multiple new $q$-analogues of classical formulas presented in the handbook \cite{var:88}. The structure of the paper is the following: in Section \ref{sec_premResults} we will include the notation as well as the needed preliminary results that are relevant for our purposes; in Section \ref{sec_suq2Algebra} we will introduce the $q$-algebra $su_q(2)$ and we discuss some of its properties, including the construction of the projection operators that will allow us to obtain an explicit formula for computing the Clebsch-Gordan coefficients of this algebra; finally, in Section \ref{sec_CGC} we will obtain the explicit expressions for the CGC of the $su_q(2)$ $q$-algebra as well as several of their properties, including the symmetry properties and some recurrence relations.


\section{Some preliminary results}\label{sec_premResults}
Let be $q\in \mathbb{R}\setminus\{\pm1\}$ and $x\in\mathbb{R}$. The symmetric quantum number $[x]$ is given by
\begin{equation}
\qn{x}=\frac{q^{x}-q^{-x}}{q-q^{-1}}. \label{sqn}
\end{equation}
It is clear form the above equation that $\qn{x}\to x$ when $q\to1$.

Following \cite{nsu} we will define for all $a\in\mathbb{R}$, the symmetric $q$-Pochhammer by 
\begin{equation}
(a\vert q)_0:=1,\quad (a\vert q)_n = \prod_{m=0}^{n-1}\qn{a+m},\quad n=1,2,3,\dots
\label{sim-poc-q}
\end{equation}
The special case when $a=1$ leads to the symmetric $q$-factorial 
\begin{equation}
(1\vert q)_n=\qn{n}!:= \qn{1}\qn{2}\cdots \qn{n-1}\qn{n}.
\label{q-fac}
\end{equation}	
The symmetric $q$-factorial symbol $\qn{n}!$ can be expressed through the symmetric $q$-Gamma function 
$\tilde{\Gamma}(s)$ defined in \cite[Eq. (3.2.24) p.~67]{nsu} by the formula
$\qn{n}!=\tilde{\Gamma}(n+1)$, where 
\begin{equation}
\tilde{\Gamma}(s)=q^{-\frac{(s-1)(s-2)}{4}}\Gamma_q(s) ,
\label{tilG-q-Gamma}
\end{equation}
and the classical $q$-Gamma function $\Gamma_q(s)$ is given by \cite[Eq. (3.6.3) p.~79]{nsu}
\begin{equation}
\Gamma_q(s)=
\begin{cases}
(1-q)^{1-s} \dfrac{\prod_{k\geq0}(1-q^{k+1}) }{\prod_{k\geq0} (1-q^{s+k})}, & \text{if $0<q<1$}, \\
q^{\frac{(s-1)(s-2)}{2}} \Gamma_{1/q}(s), & \text{if $q>1$}.
\end{cases}
\label{q-gamma-clas}
\end{equation}

Notice that
$$
\lim_{q\to1}  \qn{n}!=n!,\quad    \lim_{q\to1} (a\vert q)_k = (a)_k,
$$ 
where $(a)_n$ is the Pochhammer symbol defined by
$$
(a)_0:=1,\quad (a)_k=a(a+1)\cdots(a+n-1),\qquad n\in\mathbb{N}.
$$
Moreover, if $a\in\mathbb{N}$ then
\begin{equation} \label{q-fac-poc}
(a\vert q)_n=\frac{\qn{a+n-1}!}{\qn{a-1}!}, \qquad
(-a\vert q)_n=
\begin{cases}
\dfrac{(-1)^n \, \qn{a}!}{\qn{a-n}!}, & \text{ if $a\geq n$}, \\
0, & \text{ if $a<n$}.
\end{cases}
\end{equation}
We will also use the $q$-symmetric analogue of the binomial coefficients
\begin{equation}\label{q-bin-coe}
\qbc{n}{k}:= \frac{\qn{n}!}{\qn{k}!\qn{n-k}!}.
\end{equation}

We will use the symmetric $q$-hypergeometric function introduced in \cite[p.~138]{nsu}
\begin{equation}   {}_{p+1}F_p
		\left(\!\!\begin{array}{c} {a_1,\dots,a_{p+1}} \\ {b_1,\dots,b_p} \end{array} \,
		\bigg\vert \, q\,,\,  z \right)\!=\!
		\displaystyle \sum _{k=0}^{\infty}\frac{ (a_1\vert q)_k(a_2\vert q)_k \cdots  (a_{p+1}\vert q)_k}
		{(b_1\vert q)_k(b_2\vert q)_k \cdots  (b_p\vert q)_k}\frac{z^k}{(1\vert q)_k},
		\label{q-hip-def}
\end{equation}
which is one of the $q$-analogues of the  the generalized hypergeometric function (see e.g.~\cite{aar})
\begin{equation}
	\begin{array}{l}
		\displaystyle\hspace{0.2cm}_{p+1} F _p\displaystyle
		\bigg(\begin{array}{c}{a_1,a_2,\dots,a_{p+1}} \\ {b_1,b_2,\dots,b_q}\end{array}
		\bigg\vert\, z\bigg)=\displaystyle\sum_{k=0}^{\infty}\frac{ (a_1)_k(a_2)_k
			\cdots (a_p)_k} {(b_1)_k(b_2)_k \cdots  (b_p)_k}\frac{z^k}{k!}.
	\end{array}
	\label{hip-def}
\end{equation}
For convenience in writing, we will also use the notation
$$
{}_{p+1} F _p \bigg({a_1,a_2,\dots,a_{p+1}}; {b_1,b_2,\dots,b_q}	\bigg\vert\, q,\, z\bigg):=	
\displaystyle\hspace{0.2cm}_{p+1} F _p\displaystyle
\bigg(\begin{array}{c}{a_1,a_2,\dots,a_{p+1}} \\ {b_1,b_2,\dots,b_q}\end{array}
\bigg\vert\, q,\, z\bigg),
$$
and a similar one for the case $q=1$.

In our work we will restrict ourselves to the case when one of the $a_i$, $i=1,2,\dots,p+1$, is a negative integer, so the series \eqref{q-hip-def} is always terminating. Also notice that when one of the $a_i=0$, the 
series \eqref{q-hip-def} is equal to~1.

Before continue, it is convenient to point out that there is another $q$-analogue of the hypergeometric function \eqref{hip-def}, the so-called basic (hypergeometric) series defined by
\begin{equation}
	{}_{p+1}\varphi_p \left( \begin{array}{c} {a_1,a_2,\dots,a_{p+1}} \\
		{b_1,b_2,\dots,b_p} 
	\end{array} \,\bigg\vert\, q\,,\, z \right)=
	\displaystyle \sum _{k=0}^{\infty}\frac{ (a_1;q)_k \cdots  (a_{p+1};q)_k}{(b_1;q)_k  \cdots  (b_p;q)_k}\frac{z^k}{(q;q)_k},
	\label{q-series-basicas}
\end{equation}
where 
\begin{equation}
	(a;q)_k= \prod_{m=0}^{k-1}(1-aq^m).
	\label{coef-serie-bas}
\end{equation}
There are two mainly reasons for using the $q$-hypergeometric series \eqref{q-hip-def} instead the basic series \eqref{q-series-basicas}. The first reason is that the former is invariant with respect to the change $q\to1/q$ (that is why it is called symmetric), and the second one is that we have the following straightforward limit
$$
\lim_{q\to1}{}
{}_{p+1}F_p
\left(\!\!\begin{array}{c} {a_1,\dots,a_{p+1}} \\ {b_1,\dots,b_p} \end{array} \,
\bigg\vert\, q\,,\,  z \right)\!=
{}_{p+1}F_p
\left(\!\!\begin{array}{c} {a_1,\dots,a_{p+1}} \\ {b_1,\dots,b_p} \end{array} \,
\bigg\vert\, z \right).
$$
Since the $q$-hypergeometric series  \eqref{q-hip-def} is related with the basic series \eqref{q-series-basicas} by the expression \cite[p.~139]{nsu}
\begin{equation}\label{connection}
	{}_{p+1}\varphi_p \left( \begin{array}{c} {q^{a_1},q^{a_2},\dots,q^{a_{p+1}}} \\
		{q^{b_1},q^{b_2},\dots,q^{b_p}} \end{array} \,\bigg\vert\, q\,,\, z \right)=
	{}_{p+1}F_p
	\left(\!\!\begin{array}{c} {a_1,\dots,a_{p+1}} \\ {b_1,\dots,b_p} \end{array} \,
	\bigg\vert\, q^{1/2}\,,\,  z q^{(a_1+\dots+a_{p+1}-b_1-\dots-b_p-1)/2}   \right),
\end{equation}
one can obtain several transformation and summation formulas for the  $q$-hypergeometric series from the already-known ones for the basic series. Notice that the symmetric $q$-hypergeometric function ${}_{p+1}F_p$ in the right hand side of the previous identity is defined for $q^{1/2}$, so this detail should be taking into account when ${}_{p+1}\varphi_p$ is evaluated on $z$ depending on $q$, such as in the case of certain summation and transformation formulas that will be used in the present work. 

We will start writing the following transformation formulas for the $q$-hypergeometric function ${}_{3}F_2$, which follows from Eqs.~(III.11) and (III.12) of \cite[p.~330]{gas}, respectively
\begin{equation}\label{142q}
{}_{3}F_2\left(\!\!\begin{array}{c} {-n,a,b} \\ {d, e} \end{array} \bigg\vert\, q,\, q^{\pm(a+b-n-d-e+1)} \right)\!=
\frac{q^{\pm an}(e-a\vert q)_n}{(e\vert q)_n}  
{}_{3}F_2\left(\!\!\begin{array}{c} {-n,a,d-b} \\ {d,a-e-n+1} \end{array}\bigg\vert\, q,\, q^{\pm(b-e)} \right),
\end{equation}
\begin{equation}\label{141q}
{}_{3}F_2\left(\!\!\begin{array}{c} {-n,a,b} \\ {d, e} \end{array} \bigg\vert\, q,\, q^{\pm(a+b-n-d-e+1)} \right)\!=
\frac{(d-a\vert q)_n(e-a\vert q)_n}{(d\vert q)_n(e\vert q)_n}  
{}_{3}F_2\left(\!\!\begin{array}{c} {-n,a,a+b-d-e-n+1} \\ {a-d-n+1,a-e-n+1} \end{array}\bigg\vert q,\, q^{\pm b} \right).
\end{equation}

We will also need some summation formulas. Using the $q$-analogue of the Vandermonde 
formula \cite[Eq. (1.5.3), p.~14]{gas} we have, for $n\in\mathbb{N}$,
\begin{equation}\label{q-van}
{}_{2}F_1\left(\!\!\begin{array}{c} {-n,b} \\ {c} \end{array} \bigg\vert\, q,\, q^{\pm(b-c-n+1)} \right)\!=
\frac{(c-b\vert q)_n}{(c\vert q)_n}q^{\pm nb},
\end{equation}
which has some restrictions that should be taken into account, specially in our work when both $b$ and $c$ are integer numbers. Fist of all, in the $_2F_1$ function \eqref{q-van} it is assumed that, if  $b$ is a negative integer, then $n<|b|$. Moreover, if $c$ is also a negative integer, then $n<\min(|b|,|c|)$, otherwise there will be a zero factor in the denominator of some terms of the series.

In the following we will consider $n,b,c\in\mathbb{Z}^+$, such that $n<\min(b,c)$. From \eqref{q-van} and  \eqref{q-fac-poc},
the following useful summation formula follows
\begin{equation}\label{summa1}
{}_{2}F_1\left(\!\!\begin{array}{c} {-n,b} \\ {c} \end{array} \bigg\vert\, q,\, q^{\pm(b-c-n+1)} \right)  =
\frac{\qn{c-b-1+n}!\qn{c-1}!}{\qn{c-b-1}!\qn{c-1+n}}q^{\pm bn},\quad c>b,
\end{equation}
or, equivalently, 
\begin{equation}\label{summa1equiv}
\sum_{r=0}^{n}\frac{(-1)^r \qn{b-1+r}!}{\qn{r}!\qn{c-1+r}!\qn{n-r}!}q^{\pm(b-c-n+1)r}
 =\frac{\qn{c-b-1+n}!\qn{b-1}!}{\qn{n}!\qn{c-b-1}!\qn{c-1+n}}q^{\pm bn},\quad c>b.
\end{equation}
If we make the change $c\to-c$ and $b\to -b$ in \eqref{q-van} and use \eqref{q-fac-poc}, we find the very useful 
formula 
\begin{equation}\label{suma2}
{}_{2}F_1\left(\!\!\begin{array}{c} {-n,-b} \\ {-c} \end{array} \bigg\vert\, q,  q^{\pm(b-c+n-1)} \right)  =
(-1)^n\dfrac{\qn{c-n}!\qn{b+n-c-1}!}{\qn{c}!\qn{b-c-1}!}q^{\pm bn},\quad b>c,
\end{equation}
or, equivalently, 
\begin{equation}\label{suma2equiv}
\sum_{r=0}^{n}\frac{(-1)^{r}\qn{c-r}!}{\qn{r}!\qn{b-r}!\qn{n-r}!}q^{\pm(b-c+n-1)r} =
 (-1)^n\dfrac{\qn{c-n}!\qn{b+n-c-1}!}{\qn{b}!\qn{n}!\qn{b-c-1}!}q^{\pm bn},\quad b>c,
\end{equation}
where we recall that $b$ and $c$ are positive integers.

Another important summation formula is the Jackson's terminating $q$-analogue of Dixon's sum \cite[Eq.~(II.15), p.~355]{gas}, that in terms of the symmetric $q$-hypergeometric function reads
\begin{equation}\label{q-dix}
{}_{3}F_2\left(\!\!\begin{array}{c} -2n\,\,,\,\,b\,\,,c\,\, \\ 1-2n-b,1-2n-c \end{array} \bigg\vert\, q,  q \right)\!=
\frac{q^n\qn{2n}!(b+c+n\vert q)_n}{\qn{n}!(b+n\vert q)_n(c+n\vert q)_n},\qquad n=0,1,2,\dots
\end{equation}

The following lemma, which can be proved by induction, will be useful in the next section. Recall that the commutator acts in two operators by $\cm{A}{B}=AB-BA$, and the quantum number of a operator by
$$
\qn{A}=\frac{q^{A}-q^{-A}}{q-q^{-1}}=\sum_{n=0}^{\infty}\frac{(\gamma A)^{n}}{(2n+1)!},
\qquad q=e^{\gamma}.
$$
\begin{Lem}\label{lem_aux1}
Let us consider three operators $A_+$, $A_-$, and $B$ such that $[B,A_\pm]=\pm A_\pm$, then
\begin{align}
\label{lem1}		B^r A_\pm &=A_\pm(B\pm I)^r, \\[10pt]
\label{lem2}		\qn{\nu B+\eta}A_\pm^r&=A_\pm^r\qn{\nu B+\eta\pm\nu r}, \qquad \nu,\eta\in\mathbb{R},
\end{align}
for all $r\in\mathbb{N}$. Moreover,
\begin{equation}
\label{lem3} 	\cm{B}{A_\pm^r}=\pm r A_\pm^r
\end{equation}
and
\begin{align}
\label{lem4}	\cm{A_\pm}{A_\mp^r}&=\pm A_\mp^{r-1}\qn{r}\qn{2B\mp(r-1)},
		&&\text{if}\quad \cm{A_\pm}{A_\mp}=\pm\qn{2B}, \\[4pt]
\label{lem5}	\cm{A_\pm}{A_\mp^r}&=\mp A_\mp^{r-1}\qn{r}\qn{2B\mp(r-1)},
		&&\text{if}\quad \cm{A_\pm}{A_\mp}=\mp\qn{2B}.
	\end{align}
\end{Lem}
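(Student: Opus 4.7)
The five identities will be proved in order by induction on $r$, each one leaning on the previous ones. In every case the base $r=1$ is immediate from the hypothesis, so the work lies entirely in the inductive step.

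I would begin with \eqref{lem1}. The case $r=1$ is just $BA_\pm = A_\pm B \pm A_\pm = A_\pm(B\pm I)$, which is the hypothesis $\cm{B}{A_\pm}=\pm A_\pm$. For the inductive step, multiply $B^rA_\pm = A_\pm(B\pm I)^r$ on the left by $B$ and use the base case to commute the outer $B$ past $A_\pm$.

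To prove \eqref{lem2}, I would first iterate \eqref{lem1} in the other factor to get the auxiliary identity $BA_\pm^r = A_\pm^r(B\pm rI)$ by a one-line induction on $r$. From this, a further induction on $n$ yields $(\nu B+\eta)^n A_\pm^r = A_\pm^r(\nu B+\eta\pm\nu r)^n$, and \eqref{lem2} drops out term by term from the power-series expansion of $\qn{\cdot}$ recalled just before the lemma. Identity \eqref{lem3} is then an immediate corollary: $\cm{B}{A_\pm^r} = BA_\pm^r - A_\pm^r B = A_\pm^r(B\pm rI) - A_\pm^r B = \pm r A_\pm^r$.

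For \eqref{lem4} and \eqref{lem5} the induction is the same modulo signs; I describe \eqref{lem4} with the upper choice $\cm{A_+}{A_-}=\qn{2B}$. Applying the Leibniz rule
\[
\cm{A_+}{A_-^{r+1}} = \cm{A_+}{A_-}A_-^r + A_-\cm{A_+}{A_-^r},
\]
inserting the base commutator and the inductive hypothesis, and using \eqref{lem2} with $\nu=2$, $\eta=0$ to slide $\qn{2B}$ through $A_-^r$ (which turns it into $\qn{2B-2r}$), one reduces the problem to the scalar identity
\[
\qn{2B-2r} + \qn{r}\qn{2B-r+1} = \qn{r+1}\qn{2B-r}.
\]
This I would verify directly from the definition \eqref{sqn}: multiplying through by $(q-q^{-1})^2$ and expanding the products $(q^a-q^{-a})(q^b-q^{-b})$ on both sides, the mixed terms cancel and the surviving $q$-monomials match pairwise. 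The remaining three sign cases of \eqref{lem4}--\eqref{lem5} are obtained by the same induction with the roles of $A_+$ and $A_-$, and the overall sign of $\qn{2B}$, suitably exchanged.

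The main obstacle is not conceptual but combinatorial: it is precisely the scalar $q$-identity above, together with the careful bookkeeping of the shift produced when sliding quantum numbers past $A_\pm^r$ via \eqref{lem2}. All operator manipulations are otherwise routine applications of the Leibniz rule and of \eqref{lem1}--\eqref{lem3}, so once that single quantum-number identity is checked the whole lemma follows.
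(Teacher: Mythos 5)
Your proposal is correct and follows essentially the same inductive strategy as the paper's proof: base cases from the hypothesis, the power-series expansion of $\qn{\cdot}$ combined with \eqref{lem1} to establish \eqref{lem2}, and a terminal scalar quantum-number identity verified by expanding the definition \eqref{sqn}. The only cosmetic difference is that in \eqref{lem4} you peel $A_\mp^{r+1}=A_\mp\cdot A_\mp^{r}$ from the left where the paper peels the factor from the right, so your reduction lands on $\qn{2B\mp2r}+\qn{r}\qn{2B\mp(r-1)\,{\mp}\,(-1)\cdot 0}$-type identity $\qn{2B\mp2r}+\qn{r}\qn{2B\mp r\pm1}=\qn{r+1}\qn{2B\mp r}$ instead of the paper's $\qn{r}\qn{2B\mp(r+1)}+\qn{2B}=\qn{r+1}\qn{2B\mp r}$; both are true and reduce to the same four surviving $q$-monomials.
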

\begin{proof}
All the statements are easily proved by means of an induction procedure.

Let us begin with \eqref{lem1}. The base case is obvious for the base case $r=1$,
$$
BA_{\pm}=A_{\pm}B\pm A_{\pm}=A_{\pm}(B\pm I),
$$
where we have used the commutation relation. Assuming its validity for any $r\in\mathbb{N}$, then
$$
B^{r+1}A_{\pm}=BA_{\pm}(B+I)^{r}=(A_{\pm}B\pm A_{\pm})(B\pm I)^{r}=A_{\pm}(B\pm I)^{r+1}.
$$

For \eqref{lem2} we have, for $r=1$,
\begin{align*}
\qn{\nu B+\eta}A_{\pm}
	&=\sum_{n=0}^{\infty}\frac{\gamma^{n}}{(2n+1)!}\sum_{\ell=0}^{n}\binom{n}{\ell}\eta^{n-\ell}\nu^{\ell}B^{\ell}A_{\pm} \\
	&=A_{\pm}\sum_{n=0}^{\infty}\frac{\gamma^{n}}{(2n+1)!}\sum_{\ell=0}^{n}\binom{n}{\ell}\eta^{n-\ell}\nu^{\ell}(B\pm I)^{\ell} \\
	&=A_{\pm}\qn{\nu B+\eta\pm\nu}.
\end{align*}
Let us suppose it is true for any $r\in\mathbb{N}$, so
$$
\qn{\nu B+\eta}A_{\pm}^{r+1}
	=A_{\pm}^{r}\qn{\nu B+\eta\pm\nu r}A_{\pm}
	=A_{\pm}^{r+1}\qn{\nu B+\eta\pm\nu(r+1)},
$$
where in the third step we have used the statement for the base case.

Regarding \eqref{lem3}, the case $r=1$ is just the hypothesis of the theorem. Supposing it valid for any $r\in\mathbb{N}$, and using the commutation relation and again the identity \eqref{lem1}, we get
$$
\cm{B}{A_{\pm}^{r+1}}=(\pm rA_{\pm}^{r}+A_{\pm}^{r}B)A_{\pm}-A_{\pm}^{r+1}B
		=\pm rA_{\pm}^{r+1}+A_{\pm}^{r+1}(B\pm I)-A_{\pm}^{r+1}B
		=\pm (r+1)A_{\pm}^{r+1}.
$$

Finally, let us prove now \eqref{lem4}. The case $r=1$ is just again the hypothesis of the statement. Let us assume it is true for any $r\in\mathbb{N}$, then by means of the commutation relation and the identity \eqref{lem2} we obtain
\begin{align*}
[A_{\pm},A_{\mp}^{r+1}]
	&=(\pm A_{\mp}^{r-1}\qn{r}\qn{2B\mp(r-1)}+A_{\mp}^{r}A_{\pm})A_{\mp}-A_{\mp}^{r+1}A_{\pm} \\
	&=\pm A_{\mp}^{r}\qn{r}\qn{2B\mp(r+1)}+A_{\mp}^{r}A_{\pm})A_{\mp}-A_{\mp}^{r+1}A_{\pm} \\
	&=\pm A_{\mp}^{r}(\qn{r}\qn{2B\mp(r+1)}+\qn{2B}).
\end{align*}
By straightforward but cumbersome calculations one can check that
$$
\qn{r}\qn{2B\mp(r+1)}+\qn{2B}=\qn{r+1}\qn{2B\mp r},
$$
which finishes the proof. The procedure to prove \eqref{lem5} is similar, so we omit it.
\end{proof}


\section{The $su_q(2)$ algebra}\label{sec_suq2Algebra} 
In this section we will introduce some basic facts about the $su_q(2)$ algebra. The main related references are~\cite{koelink,smi1,smi2}. We will assume that $q\in(0,1)$. Here, and throughout the paper, the operators will be written in capital letters whereas their eigenvalues in lower case letters.
	
\subsection{Basic facts about the unitary representations of the $su_q(2)$ algebra}	
The $su_q(2)$ algebra is generated by the operators $J_+$, $J_-$, and $J_0$, which fulfill the relations
$$
\cm{J_0}{J_\pm}=\pm J_\pm, \qquad \cm{J_+}{J_-}=\qn{J_0}
$$
and the adjointness properties
$$
J_\pm^\dag=J_\mp, \qquad J_0^\dag=J_0.
$$
A basis of any irreducible and unitary representation of finite dimension $D^{j}$ is given by
$$
\ket{jm}, \qquad j=0,\frac{1}{2},1,\frac{3}{2},\ldots, \quad m=-j,-j+1,\ldots,j-1,j.
$$
The action of the generators on the basis is given by 
\begin{align}
		J_{0}\ket{jm}&=m\ket{jm} , \label{j0m}\\[2pt]
		J_{-}\ket{jm}&=\sqrt{\qn{j+m}\qn{j-m+1}}\ket{j\,m-1}, \nonumber \\[2pt]
		J_{+}\ket{jm}&=\sqrt{\qn{j-m}\qn{j+m+1}}\ket{j\,m+1}. \nonumber 
\end{align}
Hence the explicit form of matrix elements of the irreducible representation $D^{j}$ is determined by the equations
\begin{align*}
		\mel{jm'}{J_0}{jm}&=m\delta_{m'm}, \\[2pt]
		\mel{jm'}{J_-}{jm}&=\sqrt{\qn{j+m}\qn{j-m+1}}\delta_{m'm-1}, \\[2pt]
		\mel{jm'}{J_+}{jm}&=\sqrt{\qn{j-m}\qn{j+m+1}}\delta_{m'm+1}.
\end{align*}
By induction it is easy to see that 	
\begin{equation}
\label{J^r}	 
	J_-^r\ket{jm}=\sqrt{\frac{\qn{j+m}!\qn{j-m+r}!}{\qn{j-m}!\qn{j+m-r}!}}\ket{j\,m-r}
	\quad\text{and}\quad
	J_+^r\ket{jm}=\sqrt{\frac{\qn{j-m}!\qn{j+m+r}!}{\qn{j+m}!\qn{j-m-r}!}}\ket{j\,m+r}.
\end{equation}
In particular, the matrix elements for powers of the generators are
$$
	\mel{jm'}{J_-^r}{jm}=\sqrt{\frac{\qn{j+m}!\qn{j-m+r}!}{\qn{j-m}!\qn{j+m-r}!}}\delta_{m'm-r}
	\quad\text{and}\quad
	\mel{jm'}{J_+^r}{jm}=\sqrt{\frac{\qn{j-m}!\qn{j+m+r}!}{\qn{j+m}!\qn{j-m-r}!}}\delta_{m'm+r}.
$$
Using the previous identities it is clear that
$$
	J_-^{j-m}\ket{jj}=\sqrt{\frac{\qn{2j}!\qn{j-m}!}{\qn{j+m}!}}\ket{jm}
	\quad\text{and}\quad
	J_+^{j+m}\ket{j\,-j}=\sqrt{\frac{\qn{2j}!\qn{j+m}!}{\qn{j-m}!}}\ket{jm},
$$
so
$$
\ket{jm}=\sqrt{\frac{\qn{j-m}!}{\qn{2j}!\qn{j+m}!}}J_{+}^{j+m}\ket{j\,-j}
\quad\text{or}\quad
\ket{jm}=\sqrt{\frac{\qn{j+m}!}{\qn{2j}!\qn{j-m}!}}J_{-}^{j-m}\ket{jj}.
$$
Next we define the Casimir operator of second order for $su_q(2)$ algebra by
$$
C=J_- J_+ +\qn{J_0+1/2}^2=J_{+}J_{-}+\qn{J_0-1/2}^2,
$$
which is is self-adjoint, $C^\dag=C$, due to $\mel{j'm'}{C}{jm}=\mel{jm}{C}{j'm'}.$

A fundamental property of the Casimir operator is its commutativity with the operators $J_0$ and $J_\pm$. A straightforward calculations show that 
$$
\cm{C}{J_0}=0 \quad\text{and}\quad \cm{C}{J_\pm}=0.
$$
Now, taking into account that 
$$
C\ket{jm}=\qn{j+1/2}^2\ket{jm}, 
$$
i.e., all vectors of the family $\ket{jm}$ are eigenvectors of the Casimir operator with the common eigenvalue $\qn{j+1/2}^2$ (which is the eigenvalue of the maximum weight vector $\ket{jj}$). Eq.~\eqref{j0m} together with the fact $\cm{C}{J_0}=0$ imply that operators $C$ and $J_0$ share a common orthonormal system of eigenvectors which is the family $\ket{jm}$ itself 
$$
\braket{j'm'}{jm}=\delta_{j'j}\delta_{m'm}.
$$


\subsection{Projection operator for $su_q(2)$ algebra}
Let be any vector $\ket{j'm'}$ of the basis of an unitary irreducible representation $D^{j'}$ such that $m'=j\geq0$. We define the application
\begin{equation}\label{def-po-su2} 
\begin{aligned}
		\proj{j}{jj}: & \qquad D^{j'} & \to  \quad & \text{span}\{\ket{jj}\} \\
		&  \ket{j'\, m'=j} & \mapsto \quad & \ \ \delta_{j'j}\ket{jj}.
	\end{aligned}
\end{equation}
Since we have chosen $m'=j\geq0$, there will exist an unitary irreducible representation $D^{j}\subseteq D^{j'}$. The action is the following one: from all possible vectors $\ket{j'\, m'=j}$ of $D^{j'}$, the application $\proj{j}{jj}$ extracts the maximum weight vector $\ket{jj}$ of the representation $D^{j}$.

First, we note that the projection operator commutes with the generator $J_{0}$ when it is applied to a vector $\ket{j'j}$, i.e., $\cm{\proj{j}{jj}}{J_{0}}\ket{j'j}=0$, so if we want to write the projection operator as an expansion in terms of the generators $J_-$ and $J_+$, they have to be raised to the same power, i.e.,
$$
	\proj{j}{jj}=\sum_{r=0}^\infty c_rJ_{-}^rJ_{+}^r.
$$
Actually previous expansion is not a series but a finite sum because if we apply the projection operator to a vector $\ket{j'j}$ then there will be an index $r_0$ such that $J_{+}^{r_0}\ket{j'j}=0$.

Let us now obtain an explicit expression for the coefficients $c_r$. It is clear by \eqref{def-po-su2} that
$$
\proj{j}{jj}\ket{jj}=
\begin{cases}
	\ket{jj}, \\
	c_0\ket{jj}+c_1J_{-}J_{+}\ket{jj}+\cdots=c_0\ket{jj},
\end{cases}
\Rightarrow\ c_0=1.
$$
For the remainder coefficients we note that $J_{+}\proj{j+}{jj}\ket{j'j}=0$ and, on the other hand,
by the equations \eqref{lem4} and \eqref{lem2} of Lemma~\ref{lem_aux1}
\begin{align*}
0=J_{+}\proj{j+}{jj}\ket{j'j}
	&=\sum_{r=0}^\infty c_rJ_{+}J_{-}^rJ_{+}^r\ket{j'j} \\
	&=\sum_{r=0}^\infty c_r\left(J_{-}^rJ_{+}^{r+1}+J_{-}^{r-1}\qn{r}\qn{2J_{0}-r+1}J_{+}^r\right)\ket{j'j} \\
	&=\sum_{r=0}^\infty\left(c_r+c_{r+1}\qn{r+1}\qn{2J_{0}+r}\right)J_{-}^rJ_{+}^{r+1}\ket{j'j}.
\end{align*}
Therefore, by \eqref{J^r} we get 
\begin{align*}
0
	&=\sum_{r=0}^\infty\left(c_r+c_{r+1}\qn{r+1}\qn{2J_{0}+r}\right)J_{-}^rJ_{+}^{r+1}\ket{j'j} \\
	&=\frac{\qn{j'-j-1}!}{\qn{j'+j}!}\sqrt{\frac{\qn{j'-j}}{\qn{j'+j+1}}}
		\sum_{r=0}^\infty\frac{\qn{j'+j+r+1}!}{\qn{j'-j-r-1}!}\left(c_r+c_{r+1}\qn{r+1}\qn{2j+r+2}\right)\ket{j'\,j+1}
\end{align*}
If we take $j'=j+a$, with $a\in\mathbb{N}$, we obtain on the right hand side
$$
\frac{\qn{a-1}!}{\qn{2j+a}!}\sqrt{\frac{\qn{a}}{\qn{2j+a+1}}}
\sum_{r=0}^{\infty}\frac{\qn{2j+r+1+a}!}{\qn{a-1-r}!}\left(c_r+c_{r+1}\qn{r+1}\qn{2j+r+2}\right)\ket{j+a\ j+1}.
$$
We note that the factor
$$
\frac{\qn{a-1}!}{\qn{a-1-r}!}=\qn{1}\qn{2}\cdots\qn{a-r-1}\qn{a-r}
$$
vanishes if $r\geq a$, so actually the series is a sum from $r=0$ to $r=a-1$, i.e., previous identity is
$$
\frac{\qn{a-1}!}{\qn{2j+a}!}\sqrt{\frac{\qn{a}}{\qn{2j+a+1}}}
	\sum_{r=0}^{a-1}\frac{\qn{2j+r+1+a}!}{\qn{a-r-1}!}\left(c_r+c_{r+1}\qn{r+1}\qn{2j+r+2}\right)\ket{j+a\ j+1}
=0,
$$
which implies
$$
\sum_{r=0}^{a-1}\frac{\qn{2j+r+1+a}!}{\qn{a-r-1}!}\left(c_r+c_{r+1}\qn{r+1}\qn{2j+r+2}\right)=0.
$$
For $a=1$ we have
$$
\qn{2j+2}!\left(c_0+c_1\qn{1}\qn{2j+2}\right)=0 \iff c_1=\frac{-c_0}{\qn{1}\qn{2j+2}}=-\frac{1}{\qn{1}\qn{2j+2}},
$$
and, in general, 
$$
\qn{2(j+n)}!\left(c_{n-1}+c_n\qn{n}\qn{2j+n+1}\right)=0 \iff c_n=(-1)^n\frac{\qn{2j+1}!}{\qn{n}!\qn{2j+n+1)}!},
$$
that can be checked by induction. Therefore, 
$$
\proj{j}{jj}=\sum_{r=0}^{\infty}(-1)^r\frac{\qn{2j+1}!}{\qn{r}!\qn{2j+r+1}!}J_-^r J_+^r.
$$
It is not complicate to prove that $\mel{j'm'}{\proj{j}{jj}}{jm}=\mel{jm}{\proj{j}{jj}}{j'm'}$, 
so $(\proj{j}{jj})^\dag=\proj{j}{jj}$, that is, the projection operator $\proj{j}{jj}$ is self-adjoint.

Let us consider now any vector $\ket{j'm'}$ of the basis of the unitary irreducible representation $D^{j'}$ 
such that $m'\leq j$, for $j\geq0$. We define the application
$$
	\proj{j}{mm'}: D^{j'}  \to  \text{span}\{\ket{jm}\}
$$
such that
	\begin{equation}\label{su2_proj-gen}
		\proj{j}{mm'}\ket{j'm'}=\sqrt{\frac{\qn{j+m}!}{\qn{2j}!\qn{j-m}!}}J_{-}^{j-m}\proj{j}{jj}J_{+}^{j-m'}
		\sqrt{\frac{\qn{j+m'}!}{\qn{2j}!\qn{j-m'}!}}\ket{j'm'}.
\end{equation}
We note that in the case $m=m'=j$ we recover the aforementioned projection operator, so we can understand this operator as a generalized projection operator. Here we must indicate that it is possible to define the case related to $m'>j$ by means of the already known identity
$$
	\ket{j'm'}=\sqrt{\frac{\qn{j'-m'}!}{\qn{2j'}!\qn{j'+m'}!}}J_{+}^{j'+m'}\ket{j'\, -j'},
$$
which allows us to obtain
\begin{align*}
		\proj{j}{mm'}\ket{j'm'}
		&=\sqrt{\frac{\qn{j+m}!}{\qn{2j}!\qn{j-m}!}}J_{-}^{j-m}\proj{j}{jj}
		J_{+}^{j-m'}\sqrt{\frac{\qn{j+m'}!}{\qn{2j}!\qn{j-m'}!}}\ket{j'm'} \\
		&=\sqrt{\frac{\qn{j+m}!}{\qn{2j}!\qn{j-m}!}}J_{-}^{j-m}\proj{j}{jj}
		J_{+}^{j-m'}\sqrt{\frac{\qn{j+m'}!}{\qn{2j}!\qn{j-m'}!}}\sqrt{\frac{\qn{j'-m'}!}{\qn{2j'}!\qn{j'+m'}!}}
		J_{+}^{j'+m'}\ket{j'\, -j'} \\
		&=\sqrt{\frac{\qn{j+m}!}{\qn{2j}!\qn{j-m}!}}J_{-}^{j-m}\proj{j}{jj}
		J_{+}^{j+j'}\sqrt{\frac{\qn{j+m'}!\qn{j'-m'}!}{\qn{2j}!\qn{2j'}!\qn{j-m'}!\qn{j'+m'}!}}\ket{j\, -j'}.
\end{align*}

The generalized projection operator $\proj{j}{mm'}$ fulfills the property $(\proj{j}{mm'})^\dag=\proj{j}{m'm}$. Indeed, a direct calculation shows
\begin{equation*}
\mel{jm}{\proj{j}{mm'}}{j'm'}=\mel{j'm'}{\proj{j}{m'm}}{jm}.
\end{equation*}
Moreover, applying this generalized projection operator to the elements of the basis we have, after some cumbersome but straightforward calculations, that
\begin{equation*}
\proj{j}{mm'}\ket{j'm'}=\delta_{j'j}\ket{jm},\qquad m'=-j',-j'+1,\ldots,j'-1,j'.
\end{equation*}

Thanks to the above identity we are able to prove a very interesting property of the generalized projection operator $\proj{j}{mm'}$. Given any linear combination $\ket{\cdot\,m'}=\sum_{j'}a_{j'm'}\ket{j'm'}$ of vectors $\ket{j'm'}$,  
 for a given $m'$, $m'=-j',-j'+1,\ldots,j'-1,j'$, we have
$$
	\proj{j}{mm'}\ket{\cdot\,m'}=\proj{j}{mm'}\sum_{j'}a_{j'm'}\ket{j'm'}=\sum_{j'}a_{j'm'}\proj{j}{mm'}\ket{j'm'}
	=\sum_{j'}a_{j'm'}\delta_{jj'}\ket{jm}=a_{jm'}\ket{jm}.
$$
So, the generalized projection operator $\proj{j}{mm'}$ applied on an arbitrary linear combination $\sum_{j'}a_{j'm'}\ket{j'm'}$ is proportional to $\ket{jm}$. Finally, let us point out that 
\begin{equation}\label{poxpo}
\proj{j}{mm'}\proj{j'}{m'm''}=\delta_{jj'} \proj{j}{mm''}.
\end{equation}


\section{The Clebsch-Gordan coefficients}\label{sec_CGC}
Let us consider the direct product $D^{j_1}\otimes D^{j_2}$ of two representations $D^{j_1}$ and $D^{j_2}$.
In general one have that the direct product of two representations can be expressed as a direct sum 
of the irreducible representations, i.e., 
$$
D^{j_1}\otimes D^{j_2}=\bigoplus_{j}  D^{j},
$$
where the direct sum runs in a certain set of discrete values related with $j_1$ and $j_2$ as we will see later on (we will not consider the case of the continuous spectra).

Let  $\ket{j_1m_1}$ and $\ket{j_2m_2}$ the orthogonal basis vectors of $D^{j_1}$ and $D^{j_2}$, respectively, and let $\ket{j_1 j_2;jm}$ be basis vectors of $D^{j}$. Then, a typical situation is when one expand the vectors $\ket{j_1 j_2;jm}$ in the basis $\ket{j_1m_1}\ket{j_2m_2}$, that is,
\begin{equation}\label{jm-jimi}
\ket{j_1,j_2:jm}=\sum_{\substack{m_1,\ m_2 \\ m=m_1+m_2}}C_{jm}^{j_1m_1,j_2m_2}\ket{j_1m_1}\ket{j_2m_2}
=\sum_{\substack{m_1,\ m_2 \\ m=m_1+m_2}}\braket{j_1m_1,j_2m_2}{jm}\ket{j_1m_1}\ket{j_2m_2}.
\end{equation}
The coefficients $C_{jm}^{j_1m_1,j_2m_2}$ of the above expansion is called the Clebsch-Gordan coefficients (CGC) and are usually denoted by $\braket{j_1m_1,j_2m_2}{jm}$. Our aim here is to compute them. There are several ways of finding the CGC, see e.g. \cite{hou90,koor89,lie92}, but we will use the generalized projection operator described in the previous section because it is more straightforward than the other ones.

Notice that one also can write the vectors $\ket{j_1m_1}\ket{j_2m_2}$ in the basis $\ket{j_1,j_2:jm}$
$$
\ket{j_1m_1}\ket{j_2m_2}=\sum_{\substack{j',\ m' \\ m'=m_1+m_2 \\ j'=j'(j_1,j_2)}}\tilde{C}_{j',m'}^{j_1m_1',j_2m_2'}
\ket{j_1,j_2:j'm'}
$$
Since the basis vectors $\ket{j_1m_1}$ and $\ket{j_2m_2}$ can be always assume to be orthonormal then, from \eqref{jm-jimi}, one find that 
$$
\braket{j_1m_1,j_2m_2}{jm}=  \bra{j_1m_1}\bra{j_2m_2}\cdot \ket{j_1,j_2:jm}.
$$
Notice that one also can write the vectors $\ket{j_1m_1'}\ket{j_2m_2'}$ in the basis $\ket{j_1,j_2:j'm'}$
$$
\ket{j_1m_1'}\ket{j_2m_2'}=\sum_{\substack{j',\ m' \\ m'=m_1'+m_2' \\ j'=j'(j_1,j_2)}}\tilde{C}_{j',m'}^{j_1m_1',j_2m_2'}
\ket{j_1,j_2:j'm'}.
$$
But we know that applying the  generalized projection operator $\proj{j}{mm'}$ (for the $q$-algebras we are interested in) to a linear combination (in $j'$) of vectors $\ket{j_1,j_2:j'm'}$ a vector proportional to 
$\ket{j_1,j_2:jm}$ is obtained, thus
$$
\proj{j}{mm'}\ket{j_1m'_1}\ket{j_2m'_2}= a_{jm'}\ket{j_1,j_2:jm}
\iff
\ket{j_1,j_2:jm}=\frac{\proj{j}{mm'}\ket{j_1m'_1}\ket{j_2m'_2}}{\|\proj{j}{mm'}\ket{j_1m'_1}\ket{j_2m'_2}\|},
$$
where, since, in our case  $(\proj{j}{mm'})^\dag=\proj{j}{m'm}$ and using \eqref{poxpo} we find
\[\begin{split}
\|\proj{j}{mm'}\ket{j_1m'_1}\ket{j_2m'_2}\|^2 & =
\braket{\bra{ j_1m'_1}\bra{j_2m'_2}(\proj{j}{mm'})^\dag}{\proj{j}{mm'}\ket{j_1m'_1}\ket{j_2m'_2}} \\
& =\braket{\bra{ j_1m'_1}\bra{j_2m'_2}}{\proj{j}{m'm'}\ket{j_1m'_1}\ket{j_2m'_2}},
\end{split}
\]
if we want $\ket{j_1,j_2:jm}$ to be normalized to 1. 

From the above it follows that the Clebsch-Gordan coefficients can be found in terms of the projection operator
by the expression
\begin{equation}\label{eq_GenCG}
\braket{j_1m_1,j_2m_2}{jm}=
\frac{\braket{\bra{ j_1m_1}\bra{j_2m_2}}{\proj{j}{mm'}\ket{j_1m'_1}\ket{j_2m'_2}}}
{\|\proj{}{mm'}\ket{j_1m'_1}\ket{j_2m'_2}\|}.
\end{equation}
Is a matter of fact that both $m'_1$ and $m'_2$ are free parameters, so we can choose the most appropriate values in each case to make easier the computation of the Clebsch-Gordan coefficients.

Let us now apply all of this to the irreducible representations of the $q$-algebra $su_q(2)$.


\subsection{Clebsch-Gordan coefficients for the $su_q(2)$}
Before calculate the Clebsch-Gordan coefficients with the formula \eqref{eq_GenCG}, we will discuss some preliminary results (for more details see e.g.~\cite{smi1}). 

Let us consider two different irreducible and unitary representations of order $j_1$ and $j_2$, i.e,
$$
D^{j_i}, \qquad \ket{j_im_i}, \quad m_i=-j_i,-j_i+1,\ldots,j_i-1,j_i, \qquad i=1,2,
$$
which has dimension $2j_i+1$ each one and whose generators are $J_\pm(i)$ and $J_0(i)$. The direct product representation $D^{j_1}\otimes D^{j_2}$ has a basis  $\ket{j_1m_1}\ket{j_2m_2}$ of dimension $(2j_1+1)(2j_2+1)$ and it is generated by the operators
$$
J_0(12)=J_0\otimes 1+1\otimes J_0 \quad\text{and}\quad
J_\pm(12)=J_\pm\otimes q^{J_0} + q^{-J_0}\otimes J_\pm,
$$
which, for convenience, we will rewrite as
$$
J_0(12)=J_0(1)+J_0(2) \quad\text{and}\quad J_\pm(12)=J_\pm(1)q^{J_0(2)}+q^{-J_0(1)}J_\pm(2).
$$
Here the number $i=1,2$ indicate to which representation $D^{j_i}$ the operators $J_0(i),J_\pm(i)$ are acting. Notice that these operators satisfy $(J_0(12))^\dag=J_0$, $(J_\pm(12))^\dag=J_\mp(12)$,
$$
q^{aJ_0(i)}J_0(i)=J_0(i)q^{aJ_0(i)} \quad\text{and}\quad
q^{aJ_0(i)}J_\pm(i)=J_\pm(i)q^{a(J_0(i)\pm1)},
$$
and
$$
\cm{J_0(12)}{J_\pm(12)}=\pm J_\pm(12)
\quad\text{and}\quad
\cm{J_+(12)}{J_-(12)}=\qn{2J_0(12)},
$$
so we can apply Lemma \ref{lem_aux1} with the operators $J_-(12)$, $J_+(12)$ and $J_0(12)$. This will be an important fact in order to construct the corresponding projection operator~$P_{mm'}^j(12)$. 

We will also need to know the expression of the powers of $J_\pm(12)$ and $J_0(12)$. In fact, by induction it can be shown that for all $r\in\mathbb{N}$,
$$
J_0^r(12)\ket{j_1m_1}\ket{j_2m_2}=(m_1+m_2)^r\ket{j_1m_1}\ket{j_2m_2},
$$
as well as
\begin{equation}\label{eq2_BinExp}
	J_\pm^r(12)=\sum_{\ell=0}^{r}\frac{\qn{r}!}{\qn{\ell}!\qn{r-\ell}!}J_\pm^\ell(1)J_\pm^{r-\ell}(2)
	q^{\ell J_0(2)-(r-\ell)J_0(1)}.
\end{equation}
Following the form of the Casimir operator for a single representation, we define the Casimir operator~by
$$
C(12)=J_{-}(12)J_{+}(12)+\qn{J_0(12)+1/2}^2=J_{+}(12)J_{-}(12)-\qn{2J_0(12)}+\qn{J_0(12)-1/2}^2.
$$

Now we are ready for computing the CGC. For doing that we will use \eqref{eq_GenCG} and we will choose $m'_1=j_1$ and $m'_2=j-j_1$. With this choice $m'=m'_1+m'_2=j$ and then
\begin{equation}\label{ccg-su2}
\braket{j_1m_1,j_2m_2}{jm}
=\frac{\braket{\bra{ j_1m_1}\bra{j_2m_2}}{\proj{j}{mj}\ket{j_1j_1}\ket{j_2\,j-j_1}}}
{\|\proj{j}{mj}\ket{j_1j_1}\ket{j_2\,j-j_1}\|},
\end{equation}
where the generalized projection operator \eqref{su2_proj-gen} is given by
\begin{align}\label{help-su2-1}
\proj{j}{mj} &=\sqrt{\frac{\qn{j+m}!}{\qn{2j}!\qn{j-m}!}}J_{-}^{j-m}(12)\proj{j}{jj} =\sqrt{\frac{\qn{j+m}!}{\qn{2j}!\qn{j-m}!}}\sum_{r=0}^\infty\frac{(-1)^r\qn{2j+1}!}{\qn{r}!\qn{2j+r+1}!}
J_{-}^{r+j-m}(12)J_{+}^{r}(12).
\end{align}
From \eqref{ccg-su2} it is clear that $j_1-j_2\leq j\leq j_1+j_2$. But if we choose $m'_2=j_2$ and $m'_1=j-j_2$ in \eqref{eq_GenCG}, then we get  $j_2-j_1\leq j\leq j_1+j_2$ and from where it follows that
$$
D^{j_1}\otimes D^{j_2}=\bigoplus_{j=|j_1-j_2|}^{j_1+j_2} D^{j}.
$$

We will start computing the numerator of \eqref{ccg-su2}. By the binomial expansion \eqref{eq2_BinExp}
\begin{align*}
	J_{+}^{r}(12)\ket{j_1j_1}\ket{j_2\, j-j_1}	&
=\sum_{\ell=0}^{r}\frac{\qn{r}!}{\qn{\ell}!\qn{r-\ell}!}J_{+}^{\ell}(1)J_{+}^{r-\ell}(2)
	q^{\ell J_0(2)-(r-\ell)J_0(1)}\ket{j_1j_1}\ket{j_2\, j-j_1} \\
& =\sum_{\ell=0}^{r}\frac{\qn{r}!q^{\ell(j-j_1)-(r-\ell)j_1}}
{\qn{\ell}!\qn{r-\ell}!}\bigg( J_{+}^{\ell}(1)\ket{j_1j_1}\bigg)\bigg( J_{+}^{r-\ell}(2)\ket{j_2\, j-j_1}\bigg)\\
& 
 =\sqrt{\frac{\qn{j_2-j+j_1}!\qn{j_2+j-j_1+r}!}{\qn{j_2+j-j_1}!\qn{j_2-j+j_1-r}!}}
	q^{-rj_1}\ket{j_1j_1}\ket{j_2\, j-j_1+r}.
\end{align*}
The last equality follows from \eqref{J^r} and the fact that $\ket{j_1j_1}$ is the maximal weight vector
so $J_{+}^{\ell}(1)\ket{j_1j_1}=0$ for all $\ell>0$. Notice also that if $r>j_1+j_2-j$, then the above
expression vanishes.

In a similar fashion we compute 
\begin{align*}
	J_{-}^{r+j-m}&\begin{aligned}[t]
		(12)\ket{j_1j_1}\ket{j_2\, j-j_1+r}
		=\sum_{\ell=0}^{r+j-m}\frac{\qn{r+j-m}!}{\qn{\ell}!\qn{r+j-m-\ell}!}J_{-}^{\ell}(1)J_{-}^{r+j-m-\ell}(2)
		q^{\ell J_0(2)-(r+j-m-\ell)J_0(1)} \\
		{}\times \ket{j_1j_1}\ket{j_2\, j-j_1+r}
	\end{aligned} \\
	&=\begin{aligned}[t]
		\sum_{\ell=0}^{r+j-m}\frac{\qn{r+j-m}!}{\qn{\ell}!\qn{r+j-m-\ell}!}  &
		\sqrt{\frac{\qn{2j_1}!\qn{\ell}!}{\qn{2j_1-\ell}!}\frac{\qn{j_2+j-j_1+r}!\qn{j_2+j_1-m-\ell}!}{\qn{j_1+j_2-j-r}!
				\qn{j_2-j_1+m+\ell}!}} \\ & 
\qquad \qquad 	\times q^{\ell(j+r)-(r+j-m)j_1}	\ket{j_1\, j_1-\ell}\ket{j_2\, m-j_1+\ell}.
	\end{aligned}
\end{align*}
In fact, in the above expression all terms for $\ell> 2j_1$ and $\ell < j_1-j_2-m$ vanish. Then, for the numerator on \eqref{ccg-su2} we get  
\begin{multline*}
	\sqrt{\frac{\qn{j+m}!}{\qn{2j}!\qn{j-m}!}}\sum_{r=0}^{j_1+j_2-j}\frac{(-1)^r\qn{2j+1}!}{\qn{r}!\qn{2j+r+1}!}
	\sum_{\ell=\max(0,j_1-j_2-m)}^{\min(r+j-m,2j_1)}\frac{\qn{r+j-m}!}{\qn{\ell}!\qn{r+j-m-\ell}!}
	\sqrt{\frac{\qn{2j_1}!\qn{\ell}!}{\qn{2j_1-\ell}!}} \\
	\times\begin{aligned}[t]
		\sqrt{\frac{\qn{j_2+j-j_1+r}!\qn{j_2+j_1-m-\ell}!}{\qn{j_2-j+j_1-r}!\qn{j_2-j_1+m+\ell}!}}
		\sqrt{\frac{\qn{j_2-j+j_1}!\qn{j_2+j-j_1+r}!}{\qn{j_2+j-j_1}!\qn{j_2-j+j_1-r}!}} \\
		{}\times q^{\ell(j+r)-(r+j-m)j_1-rj_1}\braket{j_1m_1}{j_1\,j_1-\ell}\braket{j_2m_2}{j_2\,m-j_1+\ell}.
	\end{aligned}
\end{multline*}
However, by orthogonality, there is only one addend in the sum, which corresponds with the index $m_1=j_1-\ell \iff \ell=j_1-m_1$. This implies that $\max(0,j_1-j_2-m)\leq j_1-m_1\leq \min(r+j-m,2j_1)$, therefore, after rearranging the summand we get for the numerator of \eqref{ccg-su2} the expression
\begin{multline}\label{eq2_NumCGFinal}
	\sqrt{\qn{2j+1}\frac{\qn{2j+1}!\qn{2j_1}!\qn{j_2+j_1-j}!\qn{j+m}!\qn{j_2-m_2}!}{\qn{j+j_2-j_1}!
			\qn{j-m}!\qn{j_1+m_1}!\qn{j_1-m_1}!\qn{j_2+m_2}!}}q^{mj_1-m_1j} \\
	\times \sum_{r=\max(0,j_1-j+m_2)}^{j_1+j_2-j}\frac{(-1)^{r}\qn{j-m+r}!\qn{j+j_2-j_1+r}!}{\qn{r}!\qn{2j+1+r}!
		\qn{j-j_1-m_2+r}!\qn{j_1+j_2-j-r}!}
	q^{-(j_1+m_1)r}.
\end{multline}

Let us now compute the denominator of \eqref{ccg-su2}. It is clear that it is the same as the numerator but with parameters $m_1=j_1$, $m_2=j-j_1$, so $m=j$, so it is equal to
\begin{equation}\label{eq2_DenCGExp}
\sqrt{\frac{\qn{2j+1}!\qn{j_1+j_2-j}!}{\qn{j+j_2-j_1}!}}
\left(\sum_{r=0}^{j_1+j_2-j}\frac{(-1)^r\qn{j+j_2-j_1+r}!}{\qn{r}!\qn{2j+1+r}!\qn{j_1+j_2-j-r}!}q^{-2j_1r}\right)^{1/2}.
\end{equation}
To compute the last sum we can use the formula \eqref{summa1} that leads to the value 
\begin{equation}\label{eq2_DenCGFinal}
	\sqrt{\frac{\qn{2j+1}!\qn{2j_1}!}{\qn{j_2+j+j_1+1}!\qn{j+j_1-j_2}!}}q^{(j_2+j-j_1+1)(j_2-j+j_1)/2}.
\end{equation}
Putting equations \eqref{eq2_NumCGFinal} and \eqref{eq2_DenCGFinal} together, we obtain the following expression for the Clebsch-Gordan coefficients for the $su_q(2)$
\begin{equation}\label{cgc-su2-1}
	\begin{split}
\braket{j_1m_1,j_2m_2}{jm}=	\sqrt{\qn{2j+1}\frac{\qn{j_1+j_2+j+1}!\qn{j+j_1-j_2}!\qn{j_1+j_2-j}!\qn{j+m}!\qn{j_2-m_2}!}
{\qn{j+j_2-j_1}!\qn{j-m}!\qn{j_1+m_1}!\qn{j_1-m_1}!\qn{j_2+m_2}!}}\\
\times q^{mj_1-m_1j-(j_2+j-j_1+1)(j_2-j+j_1)/2}   \sum_{r=\max(0,j_1-j+m_2)}^{j_1+j_2-j}
\frac{(-1)^{r}\qn{j-m+r}!\qn{j+j_2-j_1+r}!\,\, 
q^{-(j_1+m_1)r}}{\qn{r}!\qn{2j+1+r}!\qn{j-j_1-m_2+r}!\qn{j_1+j_2-j-r}!}.
\end{split}
\end{equation}
If we make the change $r\to j_1+j_2-j-r$ we recover the expression in \cite{smi1}
\begin{equation}\label{cgc-su2-2}
\begin{split}
	\braket{j_1m_1,j_2m_2}{jm}_q=	\sqrt{\qn{2j+1}\frac{\qn{j_1+j_2+j+1}!\qn{j+j_1-j_2}!\qn{j_1+j_2-j}!\qn{j+m}!\qn{j_2-m_2}!}
		{\qn{j+j_2-j_1}!\qn{j-m}!\qn{j_1+m_1}!\qn{j_1-m_1}!\qn{j_2+m_2}!}}\\
	\times q^{j_1m_2-j_2m_1-\frac12(j_1+j_2-j)(j_1+j_2+j+1)} \hspace*{-1cm} 
	\sum_{r=0}^{\min(j_2-m_2,j_1+j_2-j)}\hspace*{-.5cm} \frac{(-1)^{j_1+j_2-j+r}\qn{j_1+j_2-m-r}!\qn{2j_2-r}!\,\, 
		q^{(j_1+m_1)r}}{\qn{r}!\qn{j+j_1+j_2+1-r}!\qn{j_2-m_2-r}!\qn{j_1+j_2-j-r}!}. 
\end{split}
\end{equation}


\subsection{The representation as a terminating ${_3}F_2$ $q$-hypergeometric function and its consequences}
We are interested in writing the last sum in terms of the symmetric terminating $q$-hypergeometric function ${_3}F_2(-n,a,b;d,e|q,z)$ given in \eqref{q-hip-def}, so we should fix the value of the nonnegative integer $n$. A direct calculation shows that we can fix $n=j_1+j_2-j$ or $n=j_2-m_2$ independently of which one is bigger. The main reason is that, independently of the choice we make, all the extra terms in the sum will vanish. Taking this into account, we can write the above expression \eqref{cgc-su2-2} as a symmetric terminating $q$-hypergeometric function (notice that $j_1+j_2+j+1$ is always bigger than $\max(j_2-m_2,j_1+j_2-j)$)
\begin{small}
\begin{equation*}\label{cgc-su2-3F2-long}
\begin{split}&
\braket{j_1m_1,j_2m_2}{jm}_q= 	\frac{(-1)^{j_1+j_2-j}  q^{j_1m_2-j_2m_1-\frac12(j_1+j_2-j)(j_1+j_2+j+1)}
[2j_2]_q!\qn{j_1+j_2-m}! }
{\sqrt{\qn{j_1+m_1}!\qn{j_1-m_1}!\qn{j_2+m_2}!\qn{j_2-m_2}!\qn{j-m}!}}
\\ & 
\times  \sqrt{\frac{\qn{2j+1}\qn{j+m}!\qn{j+j_1-j_2}!}{\qn{j_1+j_2+j+1}!\qn{j+j_2-j_1}!\qn{j_1+j_2-j}!}} \,
{_3}F_2 \bigg(\!\begin{array}{c}
  j-j_1-j_2\, , \, m_2-j_2\, ,\, -j-j_1-j_2-1 \\
  m-j_1-j_2 \, , \, -2j_2  
         \end{array}\!\bigg\vert\,  q\, , \, q^{j_1+m_1}\!\bigg).
\end{split}
\end{equation*}
\end{small}%
For convenience we will write the last formula as follows
\begin{equation}\label{cgc-su2-3F2}
	\braket{j_1m_1,j_2m_2}{jm}_q= (-1)^{j_1+j_2-j} \, \Gamma^{j_1,j_2,j}_{m_1,m_2,m}
	{_3}F_2 \bigg(\!\begin{array}{c}
		j-j_1-j_2\, , \, m_2-j_2\, ,\, -j-j_1-j_2-1 \\
		m-j_1-j_2 \, , \, -2j_2  
	\end{array}\!\bigg\vert\,  q\, , \, q^{j_1+m_1}\!\bigg),
\end{equation}
where
\begin{small}
	$$\Gamma^{j_1,j_2,j}_{m_1,m_2,m}=	\frac{ q^{j_1m_2-j_2m_1-\frac12(j_1+j_2-j)(j_1+j_2+j+1)}
		[2j_2]_q!\qn{j_1+j_2-m}!  \sqrt{\qn{2j+1}\qn{j+m}!\qn{j+j_1-j_2}!} }
	{\sqrt{\qn{j_1+j_2+j+1}!\qn{j+j_2-j_1}!\qn{j_1+j_2-j}!\qn{j_1+m_1}!\qn{j_1-m_1}!\qn{j_2+m_2}!\qn{j_2-m_2}!\qn{j-m}!}}.
	$$
\end{small}
In the last formula \eqref{cgc-su2-3F2}, as well as in \eqref{cgc-su2-2}, we explicitly write down the base $q$ we are using because it will be important when we discuss the symmetry properties of the CGC.


\subsubsection*{Symmetry properties}
Next we will obtain the symmetry properties for the CGC of the $su_q(2)$ algebra. In order to that, we will use the representation formula~\eqref{cgc-su2-3F2}.

If we use the transformation \eqref{142q} and the formulas \eqref{q-fac-poc} with the choice $n=j_1+j_2-j$, $a=-j-j_1-j_2-1$, $b=m_2-j_2$, $d=m-j_1-j_2$ and $e=-2j_2$, then we find, after a straightforward calculation the following symmetry formula
\begin{equation}\label{simj1j2}
	\braket{j_1m_1,j_2m_2}{jm}_q=(-1)^{j_1+j_2-j}\braket{j_2m_2,j_1m_1}{jm}_{1/q}.
\end{equation}
Similarly, if instead of  \eqref{142q} we use \eqref{141q} we find 
\begin{equation}\label{sim-m}
	\braket{j_1m_1,j_2m_2}{jm}_q=\braket{j_2\,-m_2,j_1\,-m_1}{j\,-m}_{q}=
	(-1)^{j_1+j_2-j}\braket{j_1\,-m_1,j_2\,-m_2}{j\,-m}_{1/q},
\end{equation}
where the last equality follows by using the symmetry~\eqref{simj1j2}.

In a similar way we now set $n=m_2-j_2$, $a=-j-j_1-j_2-1$, $b=j_1+j_2-j$,  $d=-2j_2$, and $e=m-j_1-j_2$. We use the transformation formula \eqref{142q}, with the minus sign, to obtain
\begin{equation}\label{sim-jj1}
	\braket{j_1m_1,j_2m_2}{jm}_q=(-1)^{j_2+m_2}q^{-m_2} \sqrt{\frac{\qn{2j+1}}{\qn{2j_1+1}}} \braket{j\,-m,j_2m_2}{j_1\,-m_1}_{1/q}.
\end{equation}
If in the right hand side of \eqref{sim-jj1} we apply consecutively the symmetries \eqref{simj1j2} and \eqref{sim-m} 
we obtain the following relation  
\begin{equation}\label{sim-jj1-smi}
	\braket{j_1m_1,j_2m_2}{jm}_q=(-1)^{j_2+m_2}q^{-m_2} \sqrt{\frac{\qn{2j+1}}{\qn{2j_1+1}}} \braket{j_2\,-m_2,j m}{j_1 m_1}_{1/q}.
\end{equation}
Combining properly all the above formulas we can find a lot of new symmetries. For example, we can rewrite 
the last formula \eqref{sim-jj1-smi} by interchanging the indexes $1\leftrightarrow2$ as well as $q\to1/q$ 
to get
\begin{equation}\label{sim-jj2-help}
\braket{j_2m_2,j_1m_1}{jm}_{1/q}=(-1)^{j_1+m_1}q^{m_1} \sqrt{\frac{\qn{2j+1}}{\qn{2j_2+1}}}
\braket{j_1\,-m_1,j m}{j_2 m_2}_{q},
\end{equation}
and then use \eqref{simj1j2} in the right and left sides of the obtained equation to get another 
symmetry property
\begin{equation}\label{sim-jj2-smi}
\braket{j_1m_1,j_2m_2}{jm}_q=(-1)^{j_1\,-m_1}q^{m_1}\sqrt{\frac{\qn{2j+1}}{\qn{2j_2+1}}} \braket{jm,j_1-m_1}{j_2 m_2}_{1/q}.
\end{equation}
Formulas \eqref{sim-jj1-smi} and \eqref{sim-jj2-smi} have been obtained in \cite{smi2} by a much more complicated 
method that the one used here.

If we now in \eqref{sim-jj2-help} make the change $m_1\to-m_1$, $m_2\to-m_2$, $m\to-m$, $q\to1/q$ and apply to the obtained CGC on the left hand side of the first equality in \eqref{sim-m}, we find another 
relevant symmetry~\cite{smi2}
\begin{equation}\label{sim-jj2m-rose}
\braket{j_1m_1,j_2m_2}{jm}_q=(-1)^{j_1-m_1}q^{m_1}\sqrt{\frac{\qn{2j+1}}{\qn{2j_2+1}}} \braket{j_1m_1,j\,-m}{j_2\,-m_2}_{1/q}.
\end{equation}

It can be also established, by a direct calculation, that the right hand side of \eqref{cgc-su2-2} or \eqref{cgc-su2-3F2} is invariant under the change $j_1\to \frac{j_1+j_2+m_1+m_2 }2$, $m_1\to \frac{j_1-j_2+m_1-m_2 }2$, $j_2\to \frac{j_1+j_2-m_1-m_2 }2$, $m_2\to \frac{j_1-j_2-m_1+m_2 }2$, $j\to j$, $m\to j_1-j_2$, so
\begin{equation}\label{sim-regge}
\begin{array}{c}
	\braket{j_1m_1,j_2m_2}{jm}_q=	\braket{\frac{j_1+j_2+m_1+m_2 }2 \,\frac{j_1-j_2+m_1-m_2 }2,
		\frac{j_1+j_2-m_1-m_2 }2 , 	\frac{j_1-j_2-m_1+m_2 }2 }{j\, j_1-j_2}_q,
\end{array}
\end{equation}
which is one of the 72 symmetry properties for the CGC discovered by Regge for the case $q\to1$ \cite{reg58} (for the $q$-case see \cite[\S4.3]{smi2}). The other 71 can be obtained by combining the above symmetry \eqref{sim-regge} with \eqref{simj1j2}, \eqref{sim-m}, and \eqref{sim-jj1}.

Finally, let us point out that all the obtained symmetries becomes into the classical ones \cite[\S8.4.3]{var:88} when $q\to1$.


\subsubsection*{Another explicit formula for the CGC}
Let us apply \eqref{142q} to the ${_3}F_2$ in \eqref{cgc-su2-3F2} setting $n=j_1+j_2-j$, $a=m_2-j_2$, $b=-j-j_1-j_2-j-1$, $d=m-j_1-j_2$, and $e=-2j_2$. This yields to the equivalent representation formula
\begin{multline}\label{cgc-su2-3F2-RW1}
\braket{j_1m_1,j_2m_2}{jm}_q= \frac{\sqrt{\qn{2j+1}\qn{j_1-m_1}! \qn{j_2+m_2}! \qn{j-m}!\qn{j+m}!\qn{j+j_1-j_2}!\qn{j+j_2-j_1}!}}
{\sqrt{\qn{j_1+m_1}!\qn{j_2-m_2}!\qn{j_1+j_2+j+1}!\qn{j_1+j_2-j}!}} \\  
\times \frac{(-1)^{j_1+j_2-j}  q^{j_1m_2-j_2m_1-\frac12(j_1+j_2-j)(j_1+j_2+j+1)}}{\qn{j-j_2-m_1}! \qn{j-j_1+m_2}! }
{_3}F_2 \bigg(\!\begin{array}{c}
	j-j_1-j_2\, , \, m_2-j_2\, ,\, -m_1-j_1 \\
	j-j_2-m_1+1 \, , \, j-j_1+m_2+1 \end{array}\!\bigg\vert\, q\, , \, q^{j_1+j_2+j+1}\!\bigg).
\end{multline}
If in the above relation we interchange the indexes $1$ and $2$, change $q\to 1/q$ and use the symmetry relation \eqref{simj1j2} we get the representation 
\begin{multline}\label{cgc-su2-3F2-RW2}
\braket{j_1m_1,j_2m_2}{jm}_q= 	\frac{\sqrt{\qn{2j+1}\qn{j_1+m_1}! \qn{j_2-m_2}! \qn{j-m}!\qn{j+m}!\qn{j+j_1-j_2}!\qn{j+j_2-j_1}!}}
{\sqrt{\qn{j_1-m_1}!\qn{j_2+m_2}!\qn{j_1+j_2+j+1}!\qn{j_1+j_2-j}!}} \\  
\times\frac{ q^{j_1m_2-j_2m_1+\frac12(j_1+j_2-j)(j_1+j_2+j+1)} }{\qn{j-j_2+m_1}! \qn{j-j_1-m_2}! }
{_3}F_2 \bigg(\!\begin{array}{c}
	j-j_1-j_2\, , \, m_1-j_1\, ,\, -m_2-j_2 \\
	j-j_2+m_1+1 \, , \, j-j_1-m_2+1 \end{array}\!\bigg| q\, , \, q^{-j_1-j_2-j-1}\!\bigg),
\end{multline}
from where, by using \eqref{q-fac-poc}, we obtain
\begin{small}
\begin{equation}\label{cgc-su2-RW-for}
\begin{split}
& \braket{j_1m_1,j_2m_2}{jm}_q=	q^{j_1m_2-j_2m_1-\frac12(j_1+j_2-j)(j_1+j_2+j+1)} \sqrt{ \qn{2j+1} }  \\ &  	\times	
\sqrt{ \frac{\qn{j_1+m_1}!\qn{j_1-m_1}!\qn{j_2+m_2}!\qn{j_2-m_2}!\qn{j+m}!\qn{j-m}!\qn{j_1+j_2-j}!\qn{j+j_1-j_2}!\qn{j+j_2-j_1}!} {\qn{j_1+j_2+j+1}!}} \\ & \qquad 	\times	
\sum_{r=0}^{\infty}  \frac{(-1)^{r}  q^{-(j_1+j_2+j+1)r }}
{\qn{r}!\qn{j_1+j_2-j-r}!\qn{j_2+m_2-r}!\qn{j_1-m_1-r}!\qn{j-j_2+m_1+r}!\qn{j-j_1-m_2+r}!},
\end{split}
\end{equation}
\end{small}%
which is the $q$-analogue of the the Racah formula for the CGC of the $su_q(2)$ algebra \cite[Eq.~(3), p.~238]{var:88}. Here, the summation index are the integers for which all the factorial arguments and nonnegative. 

The last formula can be written in terms of the $q$-analogue of the binomial coefficients \eqref{q-bin-coe}, so
\begin{small}
\begin{equation}\label{cgc-su2-RW-for-bino}
\begin{split}
\braket{j_1m_1,j_2m_2}{jm}_q=& 	q^{j_1m_2-j_2m_1-\frac12(j_1+j_2-j)(j_1+j_2+j+1)}  \sqrt{
	\frac{\qbcs{2j_1}{j_1+j_2-j} \qbcs{2j_2}{j_1+j_2-j}}
	{\qbcs{j_1+j_2+j+1}{j_1+j_2-j} \qbcs{2j_1}{j_1-m_1} \qbcs{2j_2}{j_2-m_2} \qbcs{2j}{j-m}}
}  \\ &   	\times	
\sum_{r=0}^{\infty}   (-1)^{r}  
\qbc{j_1+j_2-j}{r}  \qbc{j+j_1-j_2}{j_1-m_1-r}  \qbc{j+j_2-j_1}{j_2+m_2-r} q^{-(j_1+j_2+j+1)r},
\end{split}
\end{equation}
\end{small}%
which is seem to be, at least for the limit case $q\to1$, very convenient to compute the explicit values of the CGC \cite{shi60}. Let us also point out that the symmetry formulas \eqref{sim-m}, \eqref{simj1j2}, and \eqref{sim-jj2m-rose} can be directly obtained from the $q$-analogue of the Racah formula \eqref{cgc-su2-RW-for} or \eqref{cgc-su2-RW-for-bino} in a similar way as it was done for the limit case $q\to1$ in~\cite[p.~40--41]{rose}.

To conclude this series of CGC formulas, we obtain here another useful hypergeometric representation. By means of the symmetry \eqref{simj1j2} in the left hand side of \eqref{sim-jj2-help}, and then using the representation \eqref{cgc-su2-3F2} to the resulting CGC on right hand side we get the expression
\begin{small}
\begin{multline}\label{cgc-su2-3F2-long-equiv}
	\braket{j_1m_1,j_2m_2}{jm}_q=
	\frac{(-1)^{j_1-m_1}  q^{j_1m+j m_1+m_1-\frac{1}{2}(j_1+j-j_2)(j_1+j_2+j+1)}\qn{2j}!\qn{j_1+j-m_2}!}{\sqrt{\qn{j_1+m_1}!\qn{j_1-m_1}!\qn{j+m}!\qn{j_2-m_2}!\qn{j-m}!}}
			\\ 
	\times \sqrt{\frac{\qn{2j+1}\qn{j_2+m_2}!\qn{j_1+j_2-j}!}{\qn{j_1+j_2+j+1}!\qn{j+j_2-j_1}!\qn{j_1+j-j_2}!}} \,	{_3}F_2 \bigg(\!\begin{array}{c}
				j_2-j_1-j\, , \, m-j\, ,\, -j-j_1-j_2-1 \\
				m_2-j_1-j \, , \, -2j  
\end{array}\!\bigg\vert\, q\, , \, q^{j_1-m_1}\!\bigg).
\end{multline}
\end{small}


\subsubsection*{Some special values}
From the representation of the CGC in terms of the ${_3}F_2$ we can obtain some relevant values.

First one is when $j$ reaches its maximal value, i.e., $j=j_1+j_2$. In this case the ${}_3F_2$ function in \eqref{cgc-su2-3F2} is equal to one, so 
\begin{equation}\label{cgc-j=j1+j2}
\braket{j_1m_1,j_2m_2}{j_1+j_2\,m}=
q^{j_1m_2-j_2m_1} 
\sqrt{\frac{\qn{2j_1}!\qn{2j_2}!\qn{j_1+j_2+m}!\qn{j_1+j_2-m}!}
{\qn{2j_1+2j_2}!\qn{j_1+m_1}!\qn{j_1-m_1}!\qn{j_2+m_2}!\qn{j_2-m_2}!}}.
\end{equation}
Another useful value is $j=j_1+j_2-1$. In this case the hypergeometric function reduces to a sum of two terms (the first parameter in \eqref{q-hip-def} is $a_1=-1$), so we find, after some simplifications,
\begin{multline}\label{cgc-j=j1+j2-1}
\braket{j_1m_1,j_2m_2}{j_1+j_2-1\, m}=  
q^{j_1m_2-j_2m_1-j_1-j_2} \left\{ \qn{2j_1-2j_2}\qn{j_2-m_2} q^{j_1+m_1}-\qn{2j_2}\qn{j_1+j_2-m}  \right\}
\\
\times \sqrt{\frac{\qn{2j_1+2j_2-1}\qn{2j_1-1}!\qn{2j_2-1}!\qn{j_1+j_2+m-1}!\qn{j_1+j_2-m-1}!}
{\qn{2j_1+2j_2}!\qn{j_1+m_1}!\qn{j_1-m_1}!\qn{j_2+m_2}!\qn{j_2-m_2}!}}.
\end{multline}

If now set $j=j_1-j_2$ (we assume without loss of generality that $j_1\geq j_2$), i.e., the case when $j$ reaches its minimal value, then using \eqref{suma2} with $n=j_2-m_2$, $b=2j_1+1$, $c=j_1+j_2-m$, we find
\begin{multline}\label{cgc-j=j1-j2}
\braket{j_1m_1,j_2m_2}{j_1-j_2\, m} =
(-1)^{j_2+m_2}	q^{-j_1m_2-j_2m_1-m_2}
\\
\times \sqrt{\frac{\qn{2j_1-2j_2+1}!\qn{2j_2}!\qn{j_1+m_1}!\qn{j_1-m_1}!}
{\qn{2j_1+1}\qn{j_1-j_2-m}!\qn{j_1-j_2+m}!\qn{j_2+m_2}!\qn{j_2-m_2}!}}.
\end{multline}

If we put $m=j$ in \eqref{cgc-su2-3F2} and use the formula \eqref{suma2} with $n=j_2-m_2$, $b=j_1+j_2+j+1$, $c=2j_2$, we get
\begin{multline}\label{cgc-m=j}
\braket{j_1m_1,j_2m_2}{jj} =
(-1)^{j_1-m_1} q^{\left(j_{1}+j_{2}-j\right)\left(j+j_{2}-j_{1}+1\right)/2-(j+1)(j_1-m_1)}  \\
 \times \sqrt{\frac{\qn{2j+1}!\qn{j_1+m_1}!\qn{j_2+m_2}\qn{j_1+j_2-j}!}
{\qn{j_1-j_2+j}!\qn{j_2-j_1+j}!\qn{j_1+j_2+j+1}!\qn{j_1-m_1}!\qn{j_2-m_2}!}}.
\end{multline}
The same value is obtained if we use the expression~\eqref{cgc-su2-3F2-long-equiv}.

Putting $j=0$ in the above formula \eqref{cgc-m=j} and taking into account that, in this case, $j_1=j_2$, $m_1+m_2=0$, we obtain the value
$$
\braket{j_1m_1,j_1\,-m_1}{00}=\delta_{j_1j_2}\delta_{m_1\,-m_2} \frac{(-1)^{j_1-m_1}q^{m_1}}{\sqrt{[2j_1+1]_q}}.
$$
Notice also that, when $j=j_1+j_2$ and $m=j_1+j_2$ ($m_1=j_1$, $m_2=j_2$), then $\braket{j_1m_1,j_2m_2}{j_1+j_2\,j_1+j_2}=1$, which is the standard normalization for the CGC. 

Notice from the formula \eqref{cgc-su2-3F2-long-equiv} the value of $\braket{j_1m_1,j_2m_2}{j\,j-1}$ can be easily obtained since, in this case of equation \eqref{cgc-j=j1+j2-1}, one of the parameter of the hypergeometric function is $-1$ so it reduces to a sum of two terms.

The ${}_3F_2$ function in \eqref{cgc-su2-3F2} is equal to one also when $m_2=j_2$, thus we find
\begin{multline}\label{cgc-m2=j2}
\braket{j_1m_1,j_2j_2}{jm} =
(-1)^{j_1+j_2-j} q^{j_2(j_1-m_1)-\frac12(j_1+j_2-j)(j_1+j_2+j+1)} 
\\ 
\times \sqrt{\frac{\qn{2j+1}\, \qn{j+j_1-j_2}!\qn{j+m}!\qn{j_1-m_1}! \qn{2j_2}!}
{\qn{j_1+j_2+j+1}!\qn{j_1+j_2-j}!\qn{j+j_2-j_1}!\qn{j-m}!\qn{j_1+m_1}!}}.
\end{multline}
Notice that from \eqref{cgc-su2-3F2} the value of the $\braket{j_1m_1,j_2\,j_2-1}{jm}$ can be also obtained (again the ${_3}F_2$ reduces to a sum of two terms).

If we now put $m_1=j_1$ in \eqref{cgc-su2-3F2} and use the formula \eqref{suma2} with $n=j_1+j_2-j$, $b=j_1+j_2+j+1$ and $c=2j_2$, we get\footnote{Here we correct a misprint in~\cite{smi1}.}
\begin{multline}\label{cgc-m1=j1}
\braket{j_1j_1,j_2m_2}{jm}=
q^{-j_1(j_2-m_2)+\frac12(j_1+j_2-j)(j_1+j_2+j+1)}
\\ 
\times \sqrt{\frac{\qn{2j+1}\, \qn{j+j_2-j_1}!\qn{j+m}!\qn{j_2-m_2}! \qn{2j_1}!}
	{\qn{j_1+j_2+j+1}!\qn{j_1+j_2-j}!\qn{j+j_1-j_2}!\qn{j-m}!\qn{j_2+m_2}!}}.
\end{multline}

Using the formulas \eqref{cgc-su2-3F2-RW1} and \eqref{cgc-su2-3F2-RW2} we find the following expressions for the special values of the CGC when $m_1=-j_1$ and $m_2=-j_2$, respectively,
\begin{multline}\label{cgc-m1=-j1}
\braket{j_1-j_1,j_2m_2}{jm}=(-1)^{j_1+j_2-j}q^{j_1(j_2+m_2)+\frac12(j_1+j_2-j)(j_1+j_2+j+1)}
\\ 
\times  \sqrt{ \frac{ \qn{2j+1}\,\qn{2j_1}! \qn{j_2+m_2}! \qn{j-m}! \qn{j+j_2-j_1}!}
{\qn{j_1+j_2+j+1}!\qn{j+j_1-j_2}!\qn{j_1+j_2-j}!\qn{j+m}!\qn{j_2-m_2}!}}.
 \end{multline}
and
\begin{multline}\label{cgc-m1=-j2}
\braket{j_1m_1,j_2\,-j_2}{jm}  = q^{-j_2(j_1+m_1)-\frac12(j_1+j_2-j)(j_1+j_2+j+1)}
\\ 
 \times  \sqrt{ \frac{ \qn{2j+1}\,\qn{2j_2}! \qn{j_1+m_1}! \qn{j-m}! \qn{j+j_1-j_2}!}
{\qn{j_1+j_2+j+1}!\qn{j+j_2-j_1}!\qn{j_1+j_2-j}!\qn{j+m}!\qn{j_1-m_1}!}}.
\end{multline}

Finally, we will consider another important case: $m=m_1=m_2=0$. Notice that, in this situation
one should have $j_1$ and $j_2$ integers. In this case is convenient to use \eqref{cgc-su2-3F2-RW1}
that leads to the value
\begin{multline}\label{cgc-su2-3F2-m=0}
\braket{j_1 0,j_2 0}{j 0}_q=   \sqrt{\frac{\qn{2j+1}  \qn{j+j_1-j_2}!\qn{j+j_2-j_1}!}{\qn{j_1+j_2+j+1}!\qn{j_1+j_2-j}!}} q^{-\frac12(j_1+j_2-j)(j_1+j_2+j+1)}
\\  
\times \frac{(-1)^{j_1+j_2-j} 	\qn{j}!}{\qn{j-j_2}! \qn{j-j_1}!}
{_3}F_2 \bigg(\!\begin{array}{c}
j-j_1-j_2\, , \, -j_1\, ,\, -j_2 \\
j-j_1+1 \, , \, j-j_2+1 \end{array}\!\bigg\vert\, q\, , \, q^{j_1+j_2+j+1}\!\bigg).
\end{multline}
This formula can not be summed up as far as we know, but if one takes the limit when $q\to1$ we find for the ${_3}F_2$ function the value
$$
{_3}F_2 \bigg(\!\begin{array}{c}
j-j_1-j_2\, , \, -j_1\, ,\, -j_2 \\
j-j_1+1 \, , \, j-j_2+1 \end{array}\!\bigg\vert\, 1\!\bigg).
$$
Taking into account that $j-j_1-j_2$ is a negative integer, then we can use Dixon's Theorem \cite[Theorem 3.4.1, p.~143]{aar} to obtain the value
\begin{equation}\label{cgc-su2-3F2-m=0-q=1}
\begin{small}
\braket{j_1 0,j_2 0}{j 0}= 
\begin{cases} 
\dfrac{(-1)^{k-j}k! }{(k-j_1)!(k-j_2)!(k-j)!} \sqrt{\dfrac{(2j+1)(2k-2j_1)!(2k-2j_2)! (2k-2j)!}{(2k+1)!}}, & 
\text{if $j_1+j_2+j=2k$,} \\
0, & \text{if $j_1+j_2+j=2k+1$},
\end{cases}
\end{small}
\end{equation}
where $k=1,2,3,\dots$; it coincides with the one given in \cite[Eq.~(32), p.~251]{var:88}. In this case ($q=1$) the zero value for the case when  $j_1+j_2+j$ is an odd positive integer immediately follows from the symmetry property \eqref{sim-m}, something that does not happen for the $q$ case owing to the change of $q$ by $1/q$ in the right side of \eqref{sim-m}. In fact, in the tables obtained in \cite{smi1} it can be seen some values of the CGC for $m_1=m_2=m=0$ that are not zero when  $j_1+j_2+j$ is an odd positive integer, but becomes zero in the limit $q\to0$.

Before going ahead, let us point out that all the formulas from \eqref{cgc-j=j1+j2}--\eqref{cgc-m1=-j2} become into the classical ones \cite[\S8.5]{var:88} by taking the limit $q\to1$.


\subsubsection*{Connection with the $q$-Hahn polynomials}
As it is shown\footnote{The interested reader in the theory of orthogonal polynomials on nonuniform lattices developed by Nikiforov and Uvarov is urged to consult~\cite{ran,nsu}.} in \cite{nsu} (see also \cite{ran}), there are several analogues of the
classical Hahn polynomials but, among them, we will use the one introduced in \cite[Eq.~(3.11.53), p.~150]{nsu} and depply studied in \cite{arv} (for the case of the original $q$-Hahn polynomials see e.g. \cite{koor89,vil:92}). The main reason is related to the fact that for this family the limit $q\to1$ leads directly, without introducing any rescaling factor, to the corresponding formula for the classical ($q=1$) Hahn polynomials.

Our starting point will be different from the one used in \cite{arv} or the one used for the classical case 
in \cite[\S5.2.2.3 p.~245--246]{nsu}. Our idea is to exploit the representation of the CGC in terms of the
$q$-hypergeometric function ${_3}F_2$ given in \eqref{cgc-su2-3F2}. So, we start rewriting that hypergeometric representation~as
\begin{equation}\label{cgc-su2-3F2-short}
\braket{j_1m_1,j_2m_2}{jm}_q= (-1)^{j_1+j_2-j}\, \Gamma^{j_1,j_2,j}_{m_1,m_2,m}
 	{_3}F_2 \bigg(\!\begin{array}{c}
		m_2-j_2	\, , \,  -j-j_1-j_2-1  \, , \,  	j-j_1-j_2   \\
		m-j_1-j_2 \, , \, -2j_2  
		\end{array}\!\bigg\vert\, q\, , \, q^{j_1+m_1}\!\bigg),
\end{equation}
and using the transformation \eqref{142q} with $n=j_2-m_2$, $a= -j-j_1-j_2-1$, $b=j-j_1-j_2$, $d=m-j_1-j_2$ and $e=-2j_2$ to get 
$$
\braket{j_1m_1,j_2m_2}{jm}_q= (-1)^{j_1+j_2-j} \, \widetilde{\Gamma}^{j_1,j_2,j}_{m,m_1,m_2}
{_3}F_2 \bigg(\!\begin{array}{c}
	m_2-j_2	\, , \,  -j-j_1-j_2-1  \, , \,  m-j   \\
	m-j_1-j_2 \, , \, m_2-j-j_1  
\end{array}\!\bigg\vert\, q\, , \, q^{-(j+j_2-j_1)}\!\bigg),
$$
where 
$$
\widetilde{\Gamma}^{j_1,j_2,j}_{m,m_1,m_2}=q^{(j_2-m_2)(j+j_1+j_2+1)}\frac{(j+j_1-j_2+1|q)_{j_2-m_2}}{(-2j_2|q)_{j_2-m2}} \Gamma^{j_1,j_2,j}_{m_1,m_2,m}.
$$
Next, we apply to the last expression for the CGC the transformation \eqref{142q} again, but this time with parameters $n=j_2-m_2$, $a=m-j$, $b=-j-j_1-j_2-1$, $d=m-j_1-j_2$ and $e=m_2-j-j_1$. This leads to
\begin{equation}\label{cgc-su2-3F2-hahn}
\braket{j_1m_1,j_2m_2}{jm}_q= (-1)^{j_1+j_2-j} \, \widehat{\Gamma}^{j_1,j_2,j}_{m,m_1,m_2}
{_3}F_2 \bigg(\!\begin{array}{c}
	m_2-j_2	\, , \,  m+j+1 \, , \,  m-j   \\
	m-j_1-j_2 \, , \, m+j_1-j_2+1  
\end{array}\!\bigg\vert\, q\, , \, q^{-(j_2+m_2+1)}\!\bigg),
\end{equation}
where, now,
\begin{equation*}
\begin{split}
\widehat{\Gamma}^{j_1,j_2,j}_{m,m_1,m_2}= & q^{(j_2-m_2)(m+j_1+j_2+1)}\frac{(j+j_1-j_2+1\vert q)_{j_2-m_2} 
(-j_1-m_1\vert q)_{j_2-m_2} }{(-2j_2\vert q)_{j_2-m_2}(m_2-j-j_1\vert q)_{j_2-m_2}} \Gamma^{j_1,j_2,j}_{m_1,m_2,m}\\
= & (-1)^{j_2-m_2} q^{(j_2-m_2)(m+j_1+j_2+1)}\frac{\qn{j_1+m_1}!\qn{j_2+m_2}!}{\qn{m+j_1-j_2}\qn{2j_2}} \Gamma^{j_1,j_2,j}_{m_1,m_2,m}.
\end{split}
\end{equation*}

Before writing down the connection with the $q$-Hahn polynomials it is convenient to introduce some properties of such family of $q$-polynomials by means of the $q$-hypergeometric function ${_3}F_2$ instead of the basic series.

Let $N$ a nonnegative integer and the $q$-linear lattice $x(s)=(q^{2s}-1)/(q^2-1)$ with $s=0,1,\dots,N-1$. The $q$-Hahn polynomials on $x(s)$ can be written in terms of the $q$-symmetric hypergeometric function \eqref{q-hip-def} as follows
\begin{small}
\begin{align}
\label{q-hahn1}
h_n^{\alpha,\beta}(x(s),N)_q  =
(-1)^n q^{\frac{n}{2}(\alpha+\beta+\frac{n+1}{2})}(\beta\!+\!1|q)_n\qbc{\!N\!-\!1\!}{n}\!
{_3}F_2\bigg(\!\begin{array}{c}
	-n	\, , \,  -s \, , \,  \alpha+\beta+n+1   \\
	\beta+1 \, , \, 1 -N 
\end{array}\!\bigg\vert\, q\, , \, q^{s-N-\alpha}\!\bigg),
\\  \label{q-hahn2}
= (-1)^n q^{-\frac{n(n-1)}{4}}\frac{(\beta+1|q)_n (N+\alpha+\beta+1|q)_n}{\qn{n}!}
{_3}F_2 \bigg(\!\begin{array}{c}
	-n	\, , \,  s+\beta+1 \, , \,  \alpha+\beta+n+1   \\
	\beta+1 \, , \, N+\alpha+\beta+1  
\end{array}\!\bigg\vert\, q\, , \, q^{s-N+1}\!\bigg), 
\end{align}
\end{small}where it is assumed that $\alpha,\beta>-1$ for orthogonality purposes. To obtain the expression \eqref{q-hahn2} we have corrected a typo in \cite[Eq.~(4.38), p.~32]{arv}. From the above representation it can be shown 
(see \cite[p.~233]{nu-q}) that the Hahn polynomials are polynomials on $x(s)=(q^{2s}-1)/(q^2-1)$ of degree~$n$. Notice also that when $q\to1$, $x(s)\to s$, and the $q$-Hahn polynomials defined in \eqref{q-hahn1}-\eqref{q-hahn2} become into the classical ones~\cite[Eq.~(2.7.19), p.~52]{nsu}.

The $q$-Hahn polynomials are orthogonal with respect to a weight function $\rho$, i.e., 
\begin{equation}
\displaystyle \sum_{s = 0}^{N-1} h_n^{\alpha,\beta}(x(s),N)_q h_m^{\alpha,\beta}(x(s),N)_q \rho(s) 
\bigtriangleup x(s-1/2) = \delta_{n,m}d_n^2,
\end{equation}
where $\rho$ and $d_n^2$ are given in Table~\ref{tabla-hahn}. Hereafter we denote by $\bigtriangleup$ and $\bigtriangledown$ the progressive and regressive finite difference operators
$$
\bigtriangleup f(x)=f(x+1)-f(x) \quad\text{and}\quad \bigtriangledown f(x)=f(x)-f(x-1).
$$
They also satisfy a linear difference equation on $s$ 
\begin{equation} \xi(s)y(s+1)+\left(\lambda_{n}-\zeta(s)-\xi(s)\right)y(s)+\zeta(s)y(s-1)=0,
	\label{eqd-s}
\end{equation}
where
\begin{equation}
\xi(s)=\frac{\sigma(s)+\tau(s)\bigtriangleup x(s-1/2)}{\bigtriangleup x(s-1/2)\bigtriangledown x(s)},\quad\zeta(s)=\frac{\sigma(s)}{\bigtriangleup x(s-1/2)\bigtriangledown x(s)},
\label{dif-eq}
\end{equation}
as well as a three-term recurrence relation (which is also a second order difference equation but in $n$)
\begin{equation}
	\label{ttrr} x(s) h_n^{\alpha,\beta}(x(s),N)_q =\alpha _n h_{n+1}^{\alpha,\beta}(x(s),N)_q +
	\beta_n   P_{n+1}(x(s)) + \gamma_n   h_{n-1}^{\alpha,\beta}(x(s),N)_q , \quad n\geq 0,
\end{equation}
with $ h_{-1}^{\alpha,\beta}(x(s),N)_q=0$ and $ h_0^{\alpha,\beta}(x(s),N)_q=1$, and where again all the involved quantities and functions are given in Table~\ref{tabla-hahn}.

\begin{table}[ht!]
	\caption{\label{tabla-hahn}Main data for the $q$-Hahn polynomials
		in the lattice $x(s)=\frac{q^{2s}-1}{q^2-1}$.}  \vspace{.1cm}
{\small \begin{center}{  \renewcommand{\arraystretch}{.75}
\begin{tabular}{@{}|c|c|}\hline & \\
    &    $h_n^{\alpha,\beta}(x(s),N)_q$\\ 
	& \\
	\hline\hline
	& \\
	$\sigma(s)$ & $-q^{-\frac{N+\alpha}{2}}x(s)^{2}+q^{-\frac{1}{2}}\qn{N+\alpha}x(s)$  \\
	& \\ \hline & \\
	$\sigma(s)+\tau(s)\bigtriangleup x(s-1/2) $ & $-q^{s+\frac{\alpha+\beta}{2}}\qn{s+\beta+1}\qn{s-N+1}$ \\
	& \\ \hline & \\
	$\lambda_n$ & $q^{\frac{\beta+2-N}{2}}\qn{n}\qn{n+\alpha+\beta+1} $\\
	& \\ \hline & \\
	$\rho(s)$ & $\displaystyle q^{(\frac{\alpha+\beta}{2})s}\frac{\tilde{\Gamma}_q(s+\beta+1)
	\tilde{\Gamma}_q(N+\alpha-s)}{\tilde{\Gamma}_q(s+1)\tilde{\Gamma}_q(N-s)}$	\\
	& \\ \hline & \\
	$d_n^{2}$ & $\displaystyle \frac{q^{N(\frac{\alpha+\beta}{2}+N)-\frac{\alpha(\alpha-3)+2}{4}}\tilde{\Gamma}_{q}(n+\alpha+1)\tilde{\Gamma}_{q}(n+\beta+1)\tilde{\Gamma}_{q}(n+\alpha+\beta+N+1)}{q^{n\left(\frac{\alpha-\beta}{2}+N+1\right)}\qn{n}!\qn{N-n-1}!\tilde{\Gamma}_{q}(n+\alpha+\beta+1)\tilde{\Gamma}_{q}(2n+\alpha+\beta+2)}$ \\
	& \\ \hline & \\
	$\alpha _n$ & $\displaystyle q^{-\frac{\beta+2-N}{2}}\frac{\qn{n+1}\qn{n+\alpha+\beta+1}}{\qn{2n+\alpha+\beta+2}\qn{2n+\alpha+\beta+1}}$ \\
	 & \\ \hline & \\
	$\beta_n $ & $\displaystyle q^{\alpha+N+\frac{n}{2}-1}\frac{\qn{n+\alpha+\beta+1}\qn{n+\beta+1}\qn{N-n-2}}{\qn{2n+\alpha+\beta+2}\qn{2n+\alpha+\beta+1}}$ \\
& $\displaystyle+ \frac{q^{-(2N+\beta+n+3(\alpha+1))/2}\qn{n+\alpha}\qn{n+\alpha+\beta+N}\qn{N-n}\qn{n}}{\qn{2n+\alpha+\beta+1}\qn{2n+\alpha+\beta}^{2}\qn{2n+\alpha+\beta-1}\qn{N-n-1}}$ \\
	& \\ \hline & \\
	$\gamma_n$ & $\displaystyle q^{-\frac{N+\alpha}{2}-2}\frac{\qn{n+\alpha}\qn{n+\beta}\qn{n+\alpha+\beta+N}\qn{N-n}}{\qn{2n+\alpha+\beta+1}\qn{2n+\alpha+\beta}^{2}\qn{2n+\alpha+\beta-1}} $ \\
	& \\ \hline
\end{tabular}
}\end{center} }
\end{table}

If we compare the ${_3}F_2$ function in \eqref{cgc-su2-3F2-hahn} with the one in \eqref{q-hahn1} we will see that they coincide if we make the choice
\begin{equation}
s=j_{2}-m_{2},\quad n=j-m, \quad N=j_{1}+j_{2}-m+1,\quad\alpha=m-j_{1}+j_{2},\quad \beta=m+j_{1}-j_{2}.  
\label{parameters-CGC-j2}
\end{equation}
Therefore, it is straightforward to see, by substituting the ${_3}F_2$ function in \eqref{cgc-su2-3F2-hahn} by the one in \eqref{q-hahn1}, that the following relation holds
\begin{equation}
(-1)^{j_1-m_1}\braket{j_1m_1,j_2m_2}{jm}_q=\sqrt{\frac{\rho(s)\bigtriangleup x(s-1/2)}{d_n^2}} 
h_n^{\alpha,\beta}(x(s),N)_q.
\label{CGC-Hahn-J2}
\end{equation}
Notice that this is not the connection formula given in \cite{arv}. To obtain the aforementioned we can do the following: first we use the symmetry relation \eqref{simj1j2} and then use twice the transformation formula \eqref{142q} by means of choosing the appropriate parameters. This leads to a similar expression to  \eqref{cgc-su2-3F2-hahn} but now the hypergeometric function ${_3}F_2$ is given by
$$
{_3}F_2 \bigg(\!\begin{array}{c}
	m_1-j_1	\, , \,  m+j+1 \, , \,  m-j   \\
	m-j_1-j_2 \, , \, m+j_2-j_1+1  
\end{array}\!\bigg\vert\, q\, , \, q^{j_1+m_1+1}\!\bigg).
$$
Comparing the above function with the ${_3}F_2$ given in \eqref{q-hahn1} we obtain the connection formula
\begin{equation}
(-1)^{j_1-m_1+j-m}\braket{j_1m_1,j_2m_2}{jm}_q=\sqrt{\frac{\rho(s)\bigtriangleup x(s-1/2)}{d_n^2}}
h_n^{\alpha,\beta}(x(s),N)_{1/q},
\label{CGC-Hahn-J1}
\end{equation}
where now
\begin{equation}
s=j_{1}-m_{1},\quad N=j_{1}+j_{2}-m+1,\quad\alpha=m+j_{1}-j_{2},\quad \beta=m-j_{1}+j_{2},\quad n=j-m.  \label{parameters-CGC-j1}
\end{equation}
Notice that in \eqref{CGC-Hahn-J1} the $q$-Hahn polynomials are defined for~$q^{-1}$. 

There are several relevant consequences of the above connection formulas. For example,
the three-term recurrence relation \eqref{ttrr} leads to the following one~\cite{smi3}
\begin{equation}
\begin{array}{l}
\sqrt{\dfrac{\qn{j-m}\qn{j+m}\qn{j_1+j_2+j+1}\qn{j_2-j_1+j}\qn{j-j_2+j_1}\qn{j_1+j_2-j+1}}
	{\qn{2j+1}\qn{2j-1}\qn{2j}^{2}}}  \braket{j_1 m_1, j_2 m_2}{j-1\, m}_q  \\
\\ 
\ \ +\sqrt{\dfrac{\qn{j-m+1}\qn{j+m+1}\qn{j_1+j_2+j+2}\qn{j_2-j_1+j+1}\qn{j-j_2+j_1+1}}
	{\qn{2j+3}\qn{2j}\qn{2j+2}^{2}\qn{j_1+j_2-j}^{-1}}} \braket{j_1 m_1, j_2 m_2}{j+1\, m}_q \\
\\
\ \ +\bigg\{\dfrac{\qn{2j}\qn{2j_{1}+2}-\qn{2}\qn{j_{1}+j_{2}-j+1}\qn{j-j_{1}+j_{2}}}{\qn{2}\qn{2j}\qn{2j+2}}
\left(q^{-(j+1)/2}\qn{j+m}-q^{(j+1)/2}\qn{j-m} \right) \\
\\
\ \ -\dfrac{q^{-\frac{m_{2}}{2}}}{\qn{2}}\left(q^{-(j_{1}+1)/2}\qn{j_{1}+m_{1}}-q^{(j_{1}+1)/2}\qn{j_{1}-m_{1}}\right)\bigg\}
\braket{j_1 m_1, j_2 m_2}{j m}_q =0. 
\end{array}
\label{ccgj} 
\end{equation}
and the difference equation \eqref{dif-eq} gives~\cite{smi1}
\begin{equation}  
\begin{array}{l}
q^{-1}\sqrt{\qn{m_2-j_2-1}\qn{j_2+m_2}\qn{m_1-j_1}\qn{j_1+m_1+1}} 
	\braket{j_1\,m_1+1, j_2\, m_2-1}{j m}_q \\
\\
\quad+\sqrt{\qn{m_2-j_2}\qn{j_2+m_2+1}\qn{j_1+m_1}\qn{m_1-j_1-1}}
	\braket{j_1\,m_1-1, j_2\,m_2+1}{j m}_q \\
\\
\quad+\Big(q^{-m_1}\qn{j_2+m_2+1}\qn{j_2-m_2}+q^{m_2}\qn{j_1+m_1+1}\qn{j_1-m_1} \\
\\ 
\quad+\qn{j+1/2 }^2-\qn{m+1/2}^2\Big)q^{-(m_2-m_1+1)/2}
	\braket{j_1 m_1, j_2 m_2}{j m}_q=0,
\end{array}
\label{ttrr-m1-m2}
\end{equation}
which can be used for the numerical computation of the~CGC.

We conclude this section pointing out that the representation \eqref{cgc-su2-3F2-hahn} can be also used for connecting the CGC with the so-called $q$-dual Hahn in the lattice $x(s)=\qn{s}\qn{s+1}$. This have been done in \cite{ran-dual} and will not be considered here. In fact, there are several other relations involving the $q$-Hahn polynomials different from the \eqref{dif-eq} and \eqref{ttrr} that lead to recurrence relations for the CGC. The interested reader is referred to the aforementioned papers~\cite{ran-dual,arv}.

\section*{Concluding remarks}
As we see, the use of the special function theory, in this case the $q$-hypergeometric functions (or, equivalently, the basic series) can be very useful for the study of the Clebsch-Gordan coefficients of the $su_q(2)$ algebra. As it has been shown, there are several properties that are quite complicated to obtain by using representation theory tools, but they can be easily obtained exploiting the representation of the CGC in terms of the symmetric $q$-hypergeometric function ${_3}F_2$ (see formula \eqref{cgc-su2-3F2-long}). In particular, classical results for the algebra (group) $su(2)$ are obtained from the given ones in this work just by taking the limit $q\to1$. It is worth pointing out that a similar analysis can be done also for the  non compact algebra $su_q(1,1)$. The results for this case will be published elsewhere.


\section*{Statements and declarations}
\paragraph{\textbf{Funding}}
Renato \'Alvarez-Nodarse was partially supported by PGC2018-096504-B-C31 (FEDER (EU) / Mi\-nis\-te\-rio de Cien\-cia e In\-no\-va\-ci\'on - A\-gen\-cia Es\-ta\-tal de In\-ves\-ti\-ga\-ci\-\'on), FQM-262, and Feder-US-1254600 (FEDER (EU) - Jun\-ta de An\-da\-lu\-c\'i\-a). Alberto Arenas-G\'omez was partially supported by PGC2018-096504-B-C32 (FEDER (EU) / Mi\-nis\-te\-rio de Cien\-cia e In\-no\-va\-ci\'on - A\-gen\-cia Es\-ta\-tal de In\-ves\-ti\-ga\-ci\-\'on).

%
%

\end{document}